\newcommand\reallywidehat[1]{%
\savestack{\tmpbox}{\stretchto{%
 \scaleto{%
    \scalerel*[\widthof{\ensuremath{#1}}]{\kern-.3pt\bigwedge\kern-.3pt}%
    {\rule[-\textheight/2]{2ex}{\textheight}}%WIDTH-LIMITED BIG WEDGE
  }{\textheight}% 
}{1ex}}%
\stackon[1pt]{#1}{\tmpbox}%
}
\newcommand{\R}{\mathbb R}
\newcommand{\N}{\mathbb N}
\newcommand{\e}{\mathrm{e}}                
\newcommand{\de}{\mathop{}\!\mathrm{d}}
\newcommand{\eps}{\varepsilon}
\newcommand{\fcr}{\mathcal X}
\newcommand{\disc}{{\mathscr{D}}}
\newcommand{\wass}{{\mathscr{W}}}
\def\ie{\textit{i.e.} }
\def\be{\begin{equation}}
\def\ee{\end{equation}}
\def\bea{\begin{eqnarray}}
\def\eea{\end{eqnarray}}
\newtheorem{oss}{Remark}
\newtheorem{lem}{Lemma}
\newtheorem{prop}{Proposition}\setcounter{prop}{-1}
\newtheorem{thm}{Theorem}\setcounter{thm}{-1}
\numberwithin{equation}{section}
\title[Propagation of chaos and hydrodynamic description for topological models]
{Propagation of chaos and hydrodynamic description for topological models}
\author[D.\ Benedetto]{Dario Benedetto}
\address{Dario Benedetto \hfill\break \indent
	Dipartimento di Matematica, Universit\`a di Roma `La Sapienza'
	\hfill\break \indent
	P.le Aldo Moro 2, 00185 Roma, Italy}
\email{benedetto@mat.uniroma1.it}
\author[T.\ Paul]{Thierry Paul}
\address{Thierry Paul \hfill \break \indent
CNRS Laboratoire Ypatia des Sciences Mathematiques (LYSM), Rome, Italy }
\email{thierry.paul@sorbonne-universite.fr}
\author[S.\ Rossi]{Stefano Rossi}
\address{Stefano Rossi\hfill\break \indent
          Institut für Mathematik, Universität Zürich
	\hfill\break \indent
Winterthurerstrasse 190,
8057 Zürich, Switzerland}
\email{stefano.rossi@math.uzh.ch}
\begin{document}

\begin{abstract}
In this work, we study the deterministic Cucker-Smale model with topological interaction.
Focusing on the solutions of the corresponding Liouville equation, we show that propagation of chaos holds.
Moreover, considering monokinetic solutions, 
we also obtain a rigorous derivation of the hydrodynamic description given by a pressureless 
Euler-type system.

\end{abstract}

\keywords{propagation of chaos, monokinetic solutions, topological interaction, Cucker-Smale model}
\subjclass[2020]{
	%35A10, % cauchy kowlaskaya
	35Q92, %PDEs in connection with biology,
	% chemistry and other natural sciences
	%35B40, %asymptotic behavior of solutions to PDEs
	35Q83, %   	Vlasov equations
	% 74A25, Molecular, statistical, and kinetic theories in solid mechanics
	%76N10 %Existence, uniqueness,
	% and regularity theory for compressible fluids and gas dynamics
	% 92B25   % Biological rhythms and synchronization
	% Boltzmann
	% kinetic theory of gases
	%large deviations
	82B40, %Kinetic theory of gases in equilibrium statistical mechanic
%	92B50, %animal behavior
}

\maketitle

\tableofcontents

\section{Introduction}

In recent decades, physics of complex systems has increasingly dealt with
the description of groups of animals exhibiting collective behavior, 
such as flocks of birds, fish schools, locust swarms, and migrating cells (\cite{E},\cite{CS},\cite{LGCT},\cite{BBHA},\cite{GBKM}).

From a modeling point of view, 
these systems offer new challenges and various models have been proposed to describe 
their interaction (see for example \cite{VCBCS},\cite{BDG},\cite{VZ}).
Among the first to be introduced,
the Cucker-Smale model (\cite{CS1}) describes a bird as a self-propelling particle 
interacting with its neighbors. 
In this case, the interaction is such that neighboring birds
tend to align their velocities and
the strength of the interaction is described through
weights which depend on the metric distance between the agents.

Around $2008$, a new type of interaction between agents called ``topological interaction'' was introduced.
In \cite{BCCC,CCGP}, the CoBBS group in Rome, after collecting $3$D observational data for flocks of starlings, observed that regardless of the density of the flock, 
each agent interacts on average with its first $6$-$7$ neighbors. 
This suggests that the strength of the interaction
between agents
does not depend on the metric distance between them, 
but rather on the ``topological'' distance that takes into account the proximity rank of the latter with respect to the former (see also \cite{BFW,CCGPS,GC,NMG, SB, Ma,SB2,WC}).

In the following years, deterministic and stochastic models with topological interactions were introduced in the mathematical physics literature.
In \cite{BD, BD2, DP, DPR} kinetic models of Boltzmann type are derived for topological interaction models based on jump processes.
As far as deterministic models are concerned, in \cite{H} a Cucker-Smale model is introduced where the interaction, instead of being metric as usual,
is topological. 
In \cite{H}, the kinetic mean-field and
hydrodynamic equations of Euler type are also written and
derived in
the case
of a smoothed version of the model.
Indeed, from a mathematical point of view, topological
interactions fall outside the case of two-body interaction
and present various problems in the derivation
of mean-field and hydrodynamic equations.
Specifically, the continuity estimate à la Dobrushin (see \cite{D}),
valid in the metrical 
case of regular two-body interactions,
does not work here since the solutions are not weakly continuous with respect to the initial data, as shown in \cite{BCR}. In \cite{BCR} the existence
of the dynamics and the mean-field limit have been rigorously proved for this same model, considering solutions of the limit equation with bounded density and proving that, for positive times $t$, the Wasserstein distance between the limit solution and the empirical measure at time $t$ tends to zero as the number of particles increases, if this holds at the initial time.

In the present work, we focus on the problems of propagation of chaos and
derivation of the hydrodynamic equations for this model.
The starting point is the Liouville equation verified by the $N$-particle
system and the aim is to show that the marginals of the $N$-body distribution
function converge to tensorial powers of solutions
of a suitable kinetic equation of Vlasov type.
With the same approach, we show the validity of
the hydrodynamic equations which describe the evolution
of monokinetic initial data.
The analysis will follow the approach
in \cite{GMP} (see also \cite{NP}, \cite{NP0}, \cite{MNP} for numerical considerations and \cite{PT} for the case of agent systems).

Due to the aforementioned difficulties,
the topological nature of the problem requires different techniques
for the proof of these two results. Regarding propagation of chaos,
the analysis makes use of another distance between probability measures,
called discrepancy distance, in addition to the Wasserstein one.
Furthermore, being the dynamics well-defined only for
almost all initial data, we need to consider a regularization of
the monokinetic initial datum in the proof of the hydrodynamic derivation,
obtaining the result for any limiting point with respect to the regularization.

The outline of the paper is as follows: in Section \ref{sec:2} we
recall the topological Cucker-Smale model and introduce the associated
Liouville equation, as well as recalling the results obtained in
\cite{BCR} which will be useful later on. In Section \ref{sec:3} we
provide a proof of the propagation of chaos, which will be a direct
consequence of the validity of the law of large numbers. In Section
\ref{sec:4} we focus on the derivation of the hydrodynamic description
given by Euler-type equations studying the so-called monokinetic
solutions.

\section{Model and general framework}
\label{sec:2}

A Cucker-Smale type model for the motion of $N$ agents,
in the mean-field scaling, is the system
\begin{equation}
  \label{eq:CS}
  \left\{
  \begin{aligned}
    &\dot x_i(t)=v_i(t)\\
    &\dot v_i(t)= \frac 1N \sum_{j=1}^N p_{ij} (v_j(t) - v_i(t)), \quad i=1, \dots, N
  \end{aligned}
  \right.
\end{equation}
where $(x_i, v_i) \in \R^d \times \R^d$ ($d=1,2,3,\dots$) and
the ``communication weights'' $\{p_{ij}\}_{i,j=1}^N$
are positive functions that take into account 
the interactions between agents. In classical models,
$p_{ij}$ depends only on the
euclidean distance
% \footnote{From now on, $|\cdot |= |\cdot|_d$ denotes the euclidean norm on $\R^d$.}
$|x_i - x_j|$
between the agents.
In topological models the weights depend on the positions of the agents
through their rank:
\begin{equation}
  \label{eq:pij}
  p_{ij} \coloneq K\bigl(M(x_i,|x_i - x_j|)\bigr),
\end{equation}
where $K
\colon [0,1] \to \R^+$ is a positive Lipschitz continuous non-increasing
function,
%\sout{such that $\int_0^1 K(z) \, \de z= \gamma$}
and, for $r>0$, the rank function
\begin{equation}
  \label{eq:Mpart}
  M(x_i,r) \coloneq \frac 1N \sum_{k=1}^N \fcr\{|x_k - x_i| \le r\}
\end{equation}
counts the number of agents at a distance
less than or equal to $r$ from $x_i$, normalized with $N$.
Note that in this case $p_{ij}$ is a piecewise-constant function of the positions
of all the agents.

We indicate by $\mathcal{P}(\R^k)$ the space of probability measures on $\R^k$. 
In the mean-field limit $N\to +\infty$, the one-agent distribution
function $f_t = f(t,x,v)\in \mathcal{P}(\R^{2d})$
is expected to verify the equation 
\begin{equation}
\label{eq:principale}
\partial_t f(t,x,v) + v \cdot \nabla_{x} f(t,x,v) +
\nabla_v \cdot \left(
W[S f_t,f_t](x,v)
f(t,x,v)\right)=0,
\end{equation}
where $Sf_t(x)\coloneq\int f_t(x,v) \de v \in \mathcal P(\R^d)$
denotes the spatial distribution
and, given a probability measure $f\in \mathcal P(\R^d\times \R^d)$
and a probability measure $\rho\in \mathcal P(\R^d)$,
$W[\rho,f]$ is the mean-field interaction given by
\begin{equation}
\label{eq:interazione}
W[\rho, f](x,v)\coloneq \int_{\R^d \times \R^d} K\left(M[\rho](x,|x-y|)\right)\,(w-v)f(y,w) \, \de y \, \de w,
\end{equation}
with
\begin{equation}
  \label{eq:Mrho}
  M[\rho](x,r)\coloneq\int_{{\bar B}_r(x)} \de \rho.
\end{equation}
Here and after, ${\bar B}_r(x)$ denotes the closed
ball in $\R^d$ of center $x$ and radius $r$.
We also indicate by ${\bar B}_R$ the closed ball ${\bar B}_R(0)$. 
Note that $ M[\rho](x,r)\le 1$ for any $x \in \R^d$ and $r\ge0$.

A weak formulation of equation \eqref{eq:principale} is given
requiring that the solution $f_t$ fulfills the identity
\begin{equation}
\label{pushforward}
\int \alpha(x,v) \de f_t(x, v) =
\int \alpha\left(x(t,x,v),v(t,x,v)\right) \de f_0(x,v)
\end{equation}
for any $\alpha \in C_b(\R^d\times \R^d)$, 
where $f_0$ is the initial probability measure 
and $(x(t,x,v),v(t,x,v))$ is the solution of the following
Cauchy
problem 
\begin{equation}
  \label{eq:flusso}
  \left\{
    \begin{aligned}
      &\dot x(t,x,v)=v(t,x,v)\\ 
      &\dot v(t,x,v)=W[Sf_t,f_t](x(t,x,v),v(t,x,v))  \\
      & x(0,x,v) = x,\ \ v(0,x,v) = v.\\
    \end{aligned}
  \right.
\end{equation}
In other words, $f_t$ is the push-forward
of $f_0$ along the flow generated by the one-particle system
\eqref{eq:flusso}, where the force $W$ depends 
on 
 $f_t$ itself.
%Denoting $\delta_x \times \delta_v$ the product measure on the phase space of the Dirac measures $\delta_x$ and $\delta_v$,

Given
$$
Z_N=(x_1, \dots, x_N, v_1, \dots, v_N)\in \R^{dN} \times \R^{dN},
$$
we define the empirical measure on $\R^d \times \R^d$ as
\begin{equation}
\label{empirica}
  \mu_{Z_N} \coloneq
  \frac{1}{N}
  \sum_{i=1}^N \delta_{x_i} \otimes \delta_{v_i}.
\end{equation}
It is easy to verify that if $Z_N(t)=(x_1(t), \dots, x_N(t), v_1(t), \dots, v_N(t))$ solves
\eqref{eq:CS}, \eqref{eq:pij}, \eqref{eq:Mpart}, then $\mu_{Z_N(t)}$ 
is a weak solution of \eqref{eq:principale}.
Namely,
$M[S\mu_{Z_N}](x,r)$ is exactly 
$M(x,r)$ defined in equation \eqref{eq:Mpart} above.
%(from now on we use the more complete notation
%$M[S\mu_{Z_N(t)}](x,r)$).
Thus
we can rewrite the agent evolution for
the Cucker-Smale model \eqref{eq:CS}
with topological interactions
\eqref{eq:pij}, \eqref{eq:Mpart}
as 
\begin{equation}
  \label{eq:agenti}
  \left\{
    \begin{aligned}
      &\dot x_i(t)=v_i(t)\\
      &\dot v_i(t)= W[S\mu_{Z_N(t)},\mu_{Z_N(t)}](x_i(t),v_i(t)), \quad i=1, \dots, N.
    \end{aligned}
  \right.
\end{equation}
We indicate by $Z_N(t,Z_N) = (X_N(t,Z_N), V_N(t,Z_N))$ the solution
of this system
with initial datum $Z_N\in \R^{dN}\times \R^{dN}$.

In \cite{BCR},
 in the framework of the mean-field theory,
 the rigorous derivation of \eqref{eq:principale} starting from
 \eqref{eq:agenti}
is obtained.
More precisely, the following theorem is proved.

\begin{thm}{\cite[Theorems 3.4, 4.3 and 5.2]{BCR}}
  \label{teo:bcr}

  Let
  $K\colon [0,1] \to \R^+$
  in \eqref{eq:interazione}
  be a positive Lipschitz continuous non-increasing
  function.
  It holds that:
  \begin{enumerate}[i)]
  \item except for a set of initial data 
      $Z_N \in \R^{dN}\times \R^{dN}$ with Lebesgue measure zero,
      there exists a unique global in-time solution
  $$\left(X_N(t,Z_N),V_N(t,Z_N)\right)
  \in C^1(\R^+,\R^{2dN})\times C(\R^+,\R^{2dN})$$
  of \eqref{eq:agenti}.
  Moreover, if $|x_i|\le R_x$ and $|v_i|\le R_v$
  for any $i\in \{1,\dots N\}$,  
  we have that
\begin{equation}
  \label{eq:supporto-part}
  |x_i(t)| \le R_x+tR_v, \ |v_i(t)| \le R_v, \quad i \in \{1,\dots,N\}.
\end{equation}
 \item Let $f_0(x,v)\in L^\infty(\R^d \times \R^d)$ be a
  probability density such that 
  $\mathrm{supp}(f_0)\subset \bar B_{R_x} \times  \bar B_{R_v}$.
  Given $T>0$,
  there exists a unique 
  solution, in the weak sense defined by
  \eqref{pushforward} and \eqref{eq:flusso},
  of the kinetic equation \eqref{eq:principale} with initial datum $f_0$,
 such that
 $f\in C_w\left([0,T];L^{\infty}(\R^d\times \R^d)\right)$,
 \ie $f_t$ is a probability measure
 with bounded density for any
 $t\ge 0$, weakly continuous in $t$.

  Moreover, for any $t\ge 0$, 
    \begin{equation}
    \label{eq:supporto}
    \mathrm{supp}(f_t)\subset \bar B_{R_x+tR_v}\times \bar B_{R_v}
   \ \ \text{ and } \ 
   \| f_t\|_\infty \le \|f_0\|_\infty \e^{d \gamma t},
 \end{equation}
where $\gamma=\int_0^1 K(z) \, \de z$. 

\item If $f_0$ is as in item ii),
    for any $t>0$ 
    the empirical measure
    $\mu_{Z_N(t)}$ associated to the
    system \eqref{eq:agenti} 
    weakly converges to $f_t$ in the limit $N\to +\infty$,
    provided this is true at time zero and
    the initial data $Z_N$ are chosen
    such that the dynamics exists, as in item i).
\end{enumerate}
\end{thm}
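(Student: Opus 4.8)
The plan is to prove the three items in order, the first two by fixed-point and characteristic arguments adapted to the discontinuous weights, the third by comparison with the regularized limiting flow. For item i) the only real difficulty is that the field in \eqref{eq:agenti} is piecewise constant, hence discontinuous, in the positions through the rank \eqref{eq:Mpart}. I would first single out the singular set $\mathcal S\subset\R^{2dN}$ of configurations at which two distances $|x_k-x_i|$ coincide; away from $\mathcal S$ the neighbour ordering of each agent is locally frozen, so every weight $K(M(x_i,|x_i-x_j|))$ is locally constant and the field is smooth, giving local existence and uniqueness by Cauchy--Lipschitz. The set $\mathcal S$ is a finite union of codimension-one real-analytic hypersurfaces, hence Lebesgue-negligible. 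The bounds \eqref{eq:supporto-part} follow because $\dot v_i$ is a nonnegative combination of the increments $v_j-v_i$: for every direction $\xi$ the support function $\max_i \xi\cdot v_i$ is non-increasing, so the velocities stay in the convex hull of the initial ones and $|v_i(t)|\le R_v$, and integrating $\dot x_i=v_i$ gives the position bound. These a priori bounds exclude blow-up, so the local solution can only fail to continue by meeting $\mathcal S$; a transversality/coarea argument shows that, outside a further null set of initial data, trajectories cross $\mathcal S$ only at isolated times and can be prolonged, giving a global solution for almost every $Z_N$.

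For item ii) the crucial point is that a bounded density regularizes the rank. If $\rho=Sf$ has density bounded by $\|\rho\|_\infty$, then the map \eqref{eq:Mrho} satisfies $|M[\rho](x,r)-M[\rho](x',r')|\lesssim \|\rho\|_\infty\bigl(|x-x'|+|r-r'|\bigr)$, since the symmetric difference of two nearby balls has small volume; composing with the Lipschitz $K$ makes $W[Sf,f]$ in \eqref{eq:interazione} Lipschitz in $(x,v)$, uniformly on the compact supports. I would then run the classical scheme: for a given $f\in C_w([0,T];L^\infty)$ solve the characteristics \eqref{eq:flusso}, define a new measure by the push-forward \eqref{pushforward}, and show that this map is a contraction in the $1$-Wasserstein distance $\wass$ on a short interval while propagating the $L^\infty$ bound, iterating up to any $T$. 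The explicit constant in \eqref{eq:supporto} comes from the phase-space divergence: since $\grad_x\cdot v=0$ and the change of variables $u=M[\rho](x,s)$ turns $\int K\bigl(M[\rho](x,|x-y|)\bigr)\,\de\rho(y)$ into $\int_0^1 K(u)\,\de u=\gamma$, one finds $\grad_v\cdot W=-d\gamma$ identically, whence $\|f_t\|_\infty\le\|f_0\|_\infty\,\e^{d\gamma t}$, while the support estimate is the characteristic version of \eqref{eq:supporto-part}.

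Item iii) is the heart of the matter, and the Dobrushin estimate cannot be applied directly: the indicator of a ball in \eqref{eq:Mrho} is not Lipschitz, so $W$ is not $\wass$-continuous in its measure arguments. The plan is to interpose the measure $\sigma_N(t)\coloneq\Phi_t\#\mu_{Z_N(0)}$, where $\Phi_t$ is the Lipschitz characteristic flow of the limit equation constructed in item ii), and to split $\wass(f_t,\mu_{Z_N(t)})\le \wass(f_t,\sigma_N(t))+\wass(\sigma_N(t),\mu_{Z_N(t)})$. The first term is controlled by the contraction for the regular flow $\Phi_t$, so it vanishes as soon as $\wass(f_0,\mu_{Z_N(0)})\to0$. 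For the second term both measures start from $\mu_{Z_N(0)}$ but evolve along different fields, so a Gronwall estimate on the trajectory difference requires bounding $|K(M[Sf_t](x,r))-K(M[S\mu_{Z_N(t)}](x,r))|$, and hence the discrepancy $(Sf_t-S\mu_{Z_N(t)})(\bar B_r(x))$.

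At this step I would exploit the bounded density of $f_t$ from \eqref{eq:supporto}: approximating the indicator $\fcr\{|y-x|\le r\}$ by a function that is Lipschitz across a shell of width $\eps$, the Lipschitz part is bounded by $\wass/\eps$ and the shell contribution by $\|Sf_t\|_\infty\,\eps$, so optimizing in $\eps$ controls the discrepancy by $C\sqrt{\wass(f_t,\mu_{Z_N(t)})}$. Feeding this sub-linear modulus into a Gronwall inequality in Osgood form closes the loop on each compact interval $[0,T]$ and yields $\wass(f_t,\mu_{Z_N(t)})\to0$ for every $t>0$. The main obstacle is precisely this: because the dependence on the measure is only Hölder-$\tfrac12$ rather than Lipschitz, a linear Gronwall is unavailable, and the self-consistent appearance of $\mu_{Z_N(t)}$ in its own field must be absorbed through the triangle inequality before integrating.
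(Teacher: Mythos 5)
This theorem is not proved in the present paper at all: it is quoted verbatim from \cite{BCR} (Theorems 3.4, 4.3 and 5.2), and the correct mechanism behind it is visible only through the tools the paper imports, namely the discrepancy distance \eqref{eq:discr}, the interpolation \eqref{eq:dw1}, the empirical-measure comparison \eqref{eq:dmunu}, and the intermediate-dynamics argument replicated in the Appendix. Measured against that, your plan for item i) is the right idea (the tie set $\mathcal S$ is a null union of hypersurfaces, local Cauchy--Lipschitz off it, a priori bounds \eqref{eq:supporto-part} from convexity of the velocity hull), though the continuation step is the actual crux and is hand-waved: \cite{BCR} argue via quasi-invariance of Lebesgue measure under the piecewise-smooth flow (the velocity divergence is bounded), not via transversality/coarea. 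In item ii) your divergence computation is correct and is a nice observation: since $M[\rho](x,\cdot)$ is continuous for $\rho\in L^\infty$, the change of variables $u=M[\rho](x,r)$ gives $\nabla_v\cdot W[\rho,f]=-d\int_0^1K(u)\,\de u=-d\gamma$ exactly, which yields the $\e^{d\gamma t}$ bound in \eqref{eq:supporto}; and your interposed measure $\sigma_N(t)=\Phi_t\#\mu_{Z_N(0)}$ in item iii) is precisely the auxiliary flow the paper uses ($\mu_{\Sigma_N(t)}$ in the proof of Theorem \ref{thm1}).

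The genuine gap is in how you close the estimates in ii) and iii). In both places the measure-dependence of $W$ is controlled only through $\disc\lesssim\sqrt{\wass_1}$, and you apply this square root to the \emph{self-referential} quantity --- the distance between Picard iterates (or two solutions) in ii), and $\wass_1(f_t,\mu_{Z_N(t)})$ in iii) --- then invoke a contraction, respectively a ``Gronwall inequality in Osgood form''. This cannot work: $\omega(s)=\sqrt s$ fails Osgood's criterion, since $\int_{0^+}\de s/\sqrt s<\infty$, and the comparison ODE $\dot u=C(u+\sqrt u)$, $u(0)=u_0$, gives $u(t)\le\bigl((\sqrt{u_0}+1)\,\e^{Ct/2}-1\bigr)^2$, which tends to $(\e^{Ct/2}-1)^2>0$ as $u_0\to 0$; likewise $d\mapsto CT(d+\sqrt d)$ is not a contraction near $d=0$. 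So your scheme proves neither uniqueness in ii) nor convergence in iii) --- this is exactly the failure of Dobrushin-type continuity that the paper flags in the introduction, citing a counterexample in \cite{BCR}. The missing idea is inequality \eqref{eq:dmunu} (extended in the Appendix, via the $\Phi_\delta$ construction, to push-forwards of general measures): when the two measures being compared are images of the \emph{same} initial configuration under two flows that are pointwise $\delta$-close, their discrepancy is \emph{linear} in $\delta$, up to a discrepancy against the bounded-density reference. Concretely, in iii) one writes $\disc(S\mu_{Z_N(t)},Sf_t)\le \disc(S\mu_{Z_N(t)},S\sigma_N(t))+\disc(S\sigma_N(t),Sf_t)$, bounds the first term by $c\lambda(t)\|Sf_t\|_\infty\delta(t)+c\,\disc(S\sigma_N(t),Sf_t)$ using \eqref{eq:dmunu}, and only then applies \eqref{eq:dw1}; the square root thus falls exclusively on $\wass_1(S\sigma_N(t),Sf_t)$, which does not involve the unknown $\delta$ and is controlled through the Lipschitz limit flow by the initial datum. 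After this linearization a standard linear Gr\"onwall closes, with constants depending on $\|f_0\|_\infty\e^{d\gamma T}$. Your final sentence gestures at ``absorbing through the triangle inequality'', but as written the self-referential square root survives the absorption, and the step fails.
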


In the present work, we
focus on the statistical description of the dynamical system \eqref{eq:agenti}, 
considering, at time $0$, $N$
 independently and identically distributed particles
with law
$F_N(0,Z_N)= f_0^{\otimes N}$.
At time $t$ the particles are distributed
with the law
$F_N(t,Z_N)$,
weak solution of the $N$-body 
Liouville equation
\begin{equation}
\label{Liouville}
\begin{aligned}
&\partial_t F_N(t,Z_N) + \sum_{i=1}^N v_i \cdot \nabla_{x_i} F_N(t,Z_N) \\
+& 
\sum_{i=1}^N \nabla_{v_i} \cdot\Big ( \frac{1}{N} \sum_{j=1}^N 
K\Bigl(M[S\mu_{Z_N}](x_i,|x_i-x_j|)\Bigr) (v_j-v_i) F_N(t,Z_N) \Big)=0,
\end{aligned}
\end{equation}
in the sense that $F_N(t,Z_N)$ is the push-forward of
  $F_N(0,Z_N)$ along the flow $Z_N(t,Z_N)$.
Note that $F_N(t,Z_N)$, for $t > 0$, is symmetric in the exchange of particles.

  In the next section we show that, in the limit
  $N\to +\infty$, for any $s\ge 1$ the $s$-particles marginals of $F_N(t,Z_N)$
  factorize, and the limit
  is described by the solution of the mean-field equation
  $f_t$ with initial datum $f_0$.
  To quantify the convergence, we use the Wasserstein distance $\wass_1$
  defined as follows. 
  Let
  $\mu, \nu \in \mathcal{P}(\R^k)$ be two probability measures
  with finite first moments,
  \begin{equation}
    \label{def:wass}
    \wass_1(\mu, \nu)\coloneqq
    \inf_{\pi \in \mathcal{C}(\mu, \nu)} \int_{\R^k\times \R^k}
    |x-y|
    \de \pi(x,y),
  \end{equation}
  where  $\mathcal{C}(\mu, \nu)$ is the set of all couplings
  of $\mu$ and $\nu$, 
  \ie probability measures on the product space with marginals 
  $\mu$ and $\nu$ in the first and second variable, respectively.

\section{Convergence of the marginals}
\label{sec:3}
In this section we prove the propagation of chaos for solutions of the $N$-body Liouville equation \eqref{Liouville}.
We briefly explain the meaning of this expression and we refer to \cite{CD1, CD2} for a review, and to \cite{NS} for a propagation of chaos result in the case of non-topological Cucker-Smale models.

 We introduce the $s$-particles marginals 
 as follows, with 
an abuse of notation regarding the order of the space and
  velocity variables
  in $F_N(Z_N)$:
\begin{equation*}
F_{N : s} (Z_s) =\int F_N (Z_s, z_{s+1}, \dots, z_N) dz_{s+1}\dots dz_N, \qquad s=1,2, \dots, N,
\end{equation*}
where $z_i = (x_i,v_i)\in \R^{2d}$.
We expect that, if $N$ is large, the details of the individual mutual interactions are negligible,
and the description given by $F_{N :s}(t,Z_s)$ is similar
to the one given by $f_t^{\otimes s}$, where $f_t$ weakly solves 
\eqref{eq:principale}. Note that $f_t^{\otimes s}(Z_s)$ weakly
solves
$$
  \partial_t f_t^{\otimes s}(Z_s) +
  \sum_{i=1}^s v_i \cdot \nabla_{x_i}
  f_t^{\otimes s}(Z_s) +
  \sum_{i=1}^s \nabla_{v_i} \cdot \Big ( W[Sf_t, f_t](x_i,v_i) f_t^{\otimes s}
  (Z_s) \Big )=0.
$$
This last equation describes the law of the system when each particle evolves independently from the others,
with the interaction given by the mean-field
force defined in \eqref{eq:interazione}.
Then propagation of chaos holds if 
$F_{N:s}(t,Z_s)$ converges to $f^{\otimes s}_t(Z_s)$
for any $s\ge 1$.

We will prove the following result.
\begin{thm}[Propagation of chaos for the topological CS model]
  \label{thm1} 
 Assume that the interaction function $K$ is
 as in Theorem \ref{teo:bcr},
  namely a positive Lipschitz continuous non-increasing
  function.
  Let $f \in C_w([0,T]; L^\infty(\R^{2d}))$ be a weak solution
  of the kinetic equation \eqref{eq:principale} with initial datum
  $f_0(x,v)\in L^\infty(\R^d \times \R^d)$ such that $\mathrm{supp}(f_0)
  \subset \bar B_{R_x} \times\bar B_{R_v}$. 
  Consider $F_N(t)$ weak solution of
  \eqref{Liouville} such that $F_N(0)=f_0^{\otimes N}$.

For any integer  $s\in\{1,\dots,N\}$, 
it holds that 
\begin{equation*}
\sup_{t\in [0,T]}
\wass_1 (F_{N:s}(t), f ^{\otimes s}(t)) \le s
 \e^{\lambda(T)\mathcal{K}(1+
   \|f_0\|_{\infty}\e^{d\gamma T})}
 \sqrt{C_{d}(N)},
\end{equation*}
where $\gamma\coloneqq\int_0^1 K(z) \de z$, 
$\lambda(T)$ is a constant depending on $d, R_x, R_v$ and $T$,
and $\mathcal{K} \coloneqq \max (1,\mathrm{Lip}(K),\|K\|_\infty)$, 
where $\mathrm{Lip}(K)$ is the Lipschitz constant of $K$, 
while
\be
\label{fournier}
C_d(N)\coloneqq
\begin{cases}
N^{-1/2} \quad &\text{if} \quad d=1 \\
N^{-1/2}{\log(N)} \quad &\text{if} \quad d=2 \\
N^{-1/d} \quad &\text{if} \quad d>2.
\end{cases}
\ee

\end{thm}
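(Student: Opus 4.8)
The plan is to reduce propagation of chaos to a law of large numbers for the empirical measure, exactly in the spirit of \cite{GMP}. The key idea is to compare the true $N$-body flow $Z_N(t,Z_N)$ (with the genuine, $\mu_{Z_N(t)}$-dependent interaction) against an auxiliary \emph{decoupled} flow in which each particle evolves independently under the exact mean-field force $W[Sf_t,f_t]$. Concretely, for an initial configuration $Z_N=(z_1,\dots,z_N)$ I would introduce $\Psi_t(z_i)=(\xi_i(t),\eta_i(t))$, the solution of the one-particle Cauchy problem \eqref{eq:flusso} driven by the known solution $f_t$, and set $\widetilde Z_N(t)=(\Psi_t(z_1),\dots,\Psi_t(z_N))$. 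Since $f_0^{\otimes N}$ is the law of $N$ i.i.d.\ samples from $f_0$ and $f_t$ is the push-forward of $f_0$ along $\Psi_t$, the decoupled configuration $\widetilde Z_N(t)$ consists of $N$ i.i.d.\ samples from $f_t$; hence the empirical measure $\mu_{\widetilde Z_N(t)}$ is an empirical measure of i.i.d.\ $f_t$-distributed points, and its $\wass_1$-distance to $f_t$ is controlled by the quantitative law of large numbers, giving precisely the rate $\sqrt{C_d(N)}$ in \eqref{fournier} via the Fournier--Guillin estimate (using that $\mathrm{supp}(f_t)$ is compact by \eqref{eq:supporto}).

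The core of the argument is a Gr\"onwall-type comparison bounding $|Z_N(t,Z_N)-\widetilde Z_N(t)|$ in terms of $\wass_1(\mu_{\widetilde Z_N(t)},f_t)$. Both flows satisfy the same free-transport part, so the only discrepancy comes from the velocity equation, where I would estimate
\begin{equation*}
\bigl|W[S\mu_{Z_N(t)},\mu_{Z_N(t)}](z_i(t))-W[Sf_t,f_t](\Psi_t(z_i))\bigr|.
\end{equation*}
Splitting this into a term where the \emph{force} is evaluated at two nearby points and a term where the \emph{measure} generating the force differs ($\mu_{Z_N(t)}$ versus $f_t$), the first is handled by Lipschitz/linear-growth bounds on $W$ (controlling $|z_i(t)-\Psi_t(z_i)|$ with constants $\mathcal K$ and the a priori bounds $R_x,R_v$ from \eqref{eq:supporto-part}), and the second is bounded by the distance between the measures. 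It is here that the \emph{discrepancy distance} $\disc$ enters: because the rank function $M$ is built from indicators $\mathcal X\{|x_k-x_i|\le r\}$, the map $\rho\mapsto M[\rho]$ is \emph{not} $\wass_1$-Lipschitz (indicators are not Lipschitz), so the naive Dobrushin estimate fails—this is exactly the obstruction noted in \cite{BCR}. Instead I would exploit that $M[\rho](x,r)$ is a distribution function, so differences $|M[\mu_{Z_N(t)}]-M[f_t]|$ are governed by a Kolmogorov--Smirnov-type discrepancy $\disc$ between the spatial marginals, and then relate $\disc(S\mu_{\widetilde Z_N(t)},Sf_t)$ back to $\wass_1$ together with the $L^\infty$-bound $\|f_t\|_\infty\le\|f_0\|_\infty\,\e^{d\gamma t}$ (this is where the factor $1+\|f_0\|_\infty\e^{d\gamma T}$ in the exponent originates).

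Assembling these pieces, a Gr\"onwall inequality in $t$ yields
\begin{equation*}
\sup_{t\in[0,T]}|Z_N(t,Z_N)-\widetilde Z_N(t)|
\le \e^{\lambda(T)\mathcal K(1+\|f_0\|_\infty\e^{d\gamma T})}\,
\wass_1(\mu_{\widetilde Z_N},f)
\end{equation*}
at the level of configurations, and then a standard argument transfers this to the marginals: by symmetry and subadditivity of $\wass_1$ under tensorization, $\wass_1(F_{N:s}(t),f^{\otimes s}(t))\le s\,\wass_1(F_{N:1}(t),f(t))$, and $\wass_1(F_{N:1}(t),f(t))$ is bounded by the expected single-particle displacement, which the configuration estimate controls by $\sqrt{C_d(N)}$ after taking expectations. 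The factor $s$ in the statement comes precisely from this tensorization step. \textbf{The main obstacle} I anticipate is the measure-comparison term: establishing that $W[\cdot,\cdot]$ depends on its measure arguments in a way that is Lipschitz with respect to $\disc$ (not $\wass_1$), and then closing the loop by bounding $\disc$ in terms of $\wass_1$ at the cost of the $L^\infty$-norm of $f_t$—this interplay between the two distances, forced by the discontinuity of the topological rank function, is the crux that distinguishes this proof from the classical Dobrushin estimate.
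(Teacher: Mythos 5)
Your architecture is exactly the paper's: the intermediate flow driven by the mean-field force $W[Sf_t,f_t]$, the observation that the decoupled configuration consists of i.i.d.\ samples of $f_t$ so that Fournier--Guillin applies, the splitting of the force difference, the $\disc$/$\wass_1$ interplay with the $L^\infty$-bound $\|f_t\|_\infty\le\|f_0\|_\infty\e^{d\gamma t}$, and a Gr\"onwall closure. However, there is one concrete step in your plan that, as written, would fail. The force-difference term requires controlling $\disc(S\mu_{Z_N(t)},Sf_t)$, i.e.\ the discrepancy involving the \emph{true} empirical measure, while your plan only explains how to bound $\disc(S\mu_{\widetilde Z_N(t)},Sf_t)$ via the bound \eqref{eq:dw1}. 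The triangle inequality leaves over $\disc(S\mu_{Z_N(t)},S\mu_{\widetilde Z_N(t)})$, a discrepancy between two \emph{atomic} measures, to which \eqref{eq:dw1} does not apply (it needs one argument with bounded density). The only naive alternative --- apply \eqref{eq:dw1} directly to the pair $(S\mu_{Z_N(t)},Sf_t)$ and then use $\wass_1(S\mu_{Z_N(t)},Sf_t)\le\delta(t)+\wass_1(S\mu_{\widetilde Z_N(t)},Sf_t)$ --- injects a term $\sqrt{\delta(\tau)}$ into the Gr\"onwall integrand, and an inequality of the form $\delta(t)\le\int_0^t\bigl(A\delta(\tau)+B\sqrt{\delta(\tau)}+S_N\bigr)\de\tau$ with $\delta(0)=0$ does \emph{not} yield $\delta\to0$ as $S_N\to0$: the comparison ODE $u'=Au+B\sqrt u$ with $u(0)=0$ has the nonvanishing solution of order $B^2t^2/4$, so the bound would not improve with $N$. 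The paper closes precisely this gap with a dedicated lemma from \cite{BCR}, inequality \eqref{eq:dmunu}, which bounds the discrepancy of two empirical measures whose atoms are pairwise within $\delta$ \emph{linearly} in $\delta$ (at the cost of $\|Sf_\tau\|_\infty$) plus $c\,\disc(S\mu_{\widetilde Z_N(\tau)},Sf_\tau)$; the square root then lands only on the $\delta$-independent source term $\sqrt{\wass_1(S\mu_{\widetilde Z_N(\tau)},Sf_\tau)}$, and the Gr\"onwall stays linear. Without \eqref{eq:dmunu} or a substitute, your proof does not close.

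Two smaller corrections. The tensorization inequality you invoke, $\wass_1(F_{N:s}(t),f^{\otimes s}(t))\le s\,\wass_1(F_{N:1}(t),f(t))$, is not a valid lemma: $F_{N:s}$ is not a product measure, and a coupling of the one-particle marginals does not tensorize to a coupling of the joint laws. The correct route, which you already have the ingredients for, is the paper's: push forward the diagonal coupling $f_0^{\otimes N}(Z_N)\delta(Z_N-\Sigma_N)$ along the pair of flows and project onto the first $s$ coordinates, using exchangeability to get $\wass_1(F_{N:s}(t),f_t^{\otimes s})\le s D_N(t)$; the factor $s$ comes from that projection, not from tensorization. Finally, your rate bookkeeping conflates two contributions: Fournier--Guillin applied to i.i.d.\ samples in $\R^{2d}$ controls $\wass_1(\mu_{\widetilde Z_N(\tau)},f_\tau)$ at rate $C_{2d}(N)$, while the $\sqrt{C_d(N)}$ in the statement comes from $\sqrt{\wass_1(S\mu_{\widetilde Z_N(\tau)},Sf_\tau)}$ (spatial marginals in $\R^d$) through the discrepancy bound; the final rate is $\sqrt{C_d(N)}+C_{2d}(N)$, which is of order $\sqrt{C_d(N)}$ as in \eqref{fournier}, so your conclusion is right but for a slightly different reason than stated.
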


\begin{oss}
  In the proof we will
  need a quantitative version of
  the law of large numbers, \ie
  an estimate of
  $\mathbb{E}[\wass_1(\rho, \mu_{X_N})]$ when $X_N=(x_1, \dots, x_N)$
  are $N$ independently and identically distributed $\R^d$-valued
  random variables with law $\rho \in L^{\infty}(\R^d)$.
  This is a widely studied problem in probability, in connection
  with topics of optimal transport, random matching and combinatorial
  optimization
  (see, for instance, the bibliographical notes to Chapter
  6 of \cite{villani}).
  Here we use the explicit bounds reported by Fournier and Guillin in
  \cite{FG}, which imply that 
  $\mathbb{E}[\wass_1(\rho, \mu_{X_N})] \le c C_d(N)$,
  where $C_d(N)$ is defined in \eqref{fournier} and $c$
  is a constant depending polynomially on the diameter of 
  $\mathrm{supp}(\rho)$ and on $d$. 
\end{oss}

\begin{oss}
  The time dependence of the constant $\lambda(T)$ in Theorem \ref{thm1} comes
  from the growth of the support of the distribution,
  and therefore grows polynomially in time. In particular,
  it is also possible to prove the validity of the theorem for times $T$
  slowly growing with $N$.
\end{oss}

In the proof of Theorem \ref{thm1}
we will employ as a technical tool the
discrepancy distance, defined as follows:
\begin{equation}
  \label{eq:discr}
\disc(\mu, \nu)\coloneqq \sup_{x, r>0} \Big | \int_{{\bar B}_r(x)} \de \mu -
\int_{{\bar B}_r(x)} \de \nu \Big |,
\end{equation}
for $\mu, \nu$ two probability measures on $\R^d$.
We will need the following results concerning it.

\begin{prop}{\cite[Propositions 2.4 and 2.5]{BCR}}

\begin{enumerate}[i)]
  \item Let $\rho $ and $\nu$ be two probability measures on $\R^d$
  with support in a ball $\bar B_R$ and such that $\rho \in L^\infty(\R^d)$.
  Then
  \be
  \label{eq:dw1}
 \disc (\nu, \rho) \le c \sqrt{R^{d-1} \|\rho\|_{\infty}\wass_1(\nu,\rho)},
  \ee 
  where $c$ is a constant that depends on the dimension $d$.

\item Given  $X_N=(x_1,\dots,x_N)$, $Y_N=(y_1,\dots y_N)$,
  with $|x_i-y_i|\le \delta$ for some $\delta $ and any $i=1,\dots N$,
  consider
  the two empirical measures $\mu_{X_N}$ and $\mu_{Y_N}$.
  Then, for any probability measure $\rho \in L^{\infty}(\R^d)$
  supported on a ball $\bar B_R$, 
  \be
  \label{eq:dmunu}
  \disc (\mu_{X_N},\mu_{Y_N}) \le cR^{d-1}\|\rho\|_{\infty}\delta +
  c \disc (\mu_{Y_N},\rho).
  \ee
\end{enumerate}
\end{prop}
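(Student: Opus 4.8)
The plan is to reduce the discrepancy — a supremum of differences of measures over indicator functions of balls — to the quantities we can control: the Wasserstein distance in part i), and the discrepancy of $\mu_{Y_N}$ against $\rho$ in part ii). In both parts the common mechanism is that the sharp indicator $\mathbf{1}_{\bar B_r(x)}$ is not Lipschitz, so the price paid for replacing it (part i) or perturbing it (part ii) is the mass sitting in a thin spherical shell, and that mass is small precisely because $\rho$ has bounded density.

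For part i), I would fix a ball $\bar B_r(x)$ and a smoothing scale $\eta>0$, then sandwich $\mathbf{1}_{\bar B_r(x)}$ between two Lipschitz test functions: a function $\phi^+$ equal to $1$ on $\bar B_r(x)$ and vanishing outside $\bar B_{r+\eta}(x)$, and a function $\phi^-$ supported in $\bar B_r(x)$ and equal to $1$ on $\bar B_{r-\eta}(x)$, each with Lipschitz constant $\eta^{-1}$. By Kantorovich--Rubinstein duality, $\bigl|\int \phi^{\pm}\,\de\nu - \int \phi^{\pm}\,\de\rho\bigr| \le \eta^{-1}\wass_1(\nu,\rho)$, while the gap between $\int \phi^{\pm}\,\de\rho$ and $\rho(\bar B_r(x))$ is at most the $\rho$-mass of a shell of width $\eta$ adjacent to $\partial\bar B_r(x)$, hence at most $\|\rho\|_\infty$ times the shell's volume. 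Combining the two estimates gives, for every $x$ and $r$,
\[
\bigl|\nu(\bar B_r(x)) - \rho(\bar B_r(x))\bigr| \le \eta^{-1}\wass_1(\nu,\rho) + c\,\|\rho\|_\infty R^{d-1}\eta ,
\]
and optimizing the right-hand side over $\eta$ (taking $\eta \sim \sqrt{\wass_1(\nu,\rho)/(\|\rho\|_\infty R^{d-1})}$) produces exactly the square-root bound \eqref{eq:dw1}.

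For part ii), I would exploit that shifting each point by at most $\delta$ can only change its membership in $\bar B_r(x)$ if it lies within $\delta$ of the bounding sphere. Indeed, if $x_i$ and $y_i$ fall on opposite sides of $\partial\bar B_r(x)$, the triangle inequality forces $r-\delta < |y_i-x| \le r+\delta$, so every such index contributes a point $y_i$ to the closed shell $\bar B_{r+\delta}(x)\setminus \bar B_{r-\delta}(x)$. This yields
\[
\bigl|\mu_{X_N}(\bar B_r(x)) - \mu_{Y_N}(\bar B_r(x))\bigr| \le \mu_{Y_N}(\bar B_{r+\delta}(x)) - \mu_{Y_N}(\bar B_{r-\delta}(x)) .
\]
Writing each of the two ball-masses on the right as $\rho$ of the same ball plus an error bounded by $\disc(\mu_{Y_N},\rho)$, and bounding $\rho$ of the shell by $\|\rho\|_\infty$ times its volume, gives the estimate $\le 2\disc(\mu_{Y_N},\rho) + c\,\|\rho\|_\infty R^{d-1}\delta$; taking the supremum over $x$ and $r$ yields \eqref{eq:dmunu}, the factor $2$ being absorbed into $c$.

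I expect the main obstacle, shared by both parts, to be the uniform geometric estimate that a spherical shell of width $\eta$ (resp. $\delta$) and \emph{arbitrary} radius $r$, intersected with the support ball $\bar B_R$, has volume at most $cR^{d-1}\eta$ (resp. $cR^{d-1}\delta$), with $c$ depending only on $d$. This is what makes the scaling $R^{d-1}$ rather than $r^{d-1}$ appear, and it rests on the fact that the $(d-1)$-dimensional area of $\partial\bar B_r(x)\cap \bar B_R$ is bounded by $cR^{d-1}$ no matter how large $r$ is. I would isolate this as a short lemma, since the radii $r$ entering the supremum defining $\disc$ are unrestricted while only $\rho$ — not $\nu$ nor the empirical measures — is confined to $\bar B_R$.
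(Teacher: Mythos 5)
Your proof is correct; note that the paper itself does not prove this proposition but imports it from \cite[Propositions 2.4 and 2.5]{BCR}, so the fair comparison is with the cited proofs, whose technique also reappears in the paper's appendix. For part i) your argument --- sandwiching the indicator of $\bar B_r(x)$ between Lipschitz functions at scale $\eta$, applying Kantorovich--Rubinstein duality, paying $c\,\|\rho\|_\infty R^{d-1}\eta$ for the shell, and optimizing in $\eta$ --- is essentially the cited one, and the geometric lemma you isolate (a width-$\eta$ shell of \emph{arbitrary} radius meets $\bar B_R$ in volume at most $cR^{d-1}\eta$, because a sphere of any radius has surface area at most $cR^{d-1}$ inside $\bar B_R$) is exactly the right point to make explicit; it is also why only $\rho$, and not $\nu$ or the empirical measures, needs to be confined to $\bar B_R$. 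For part ii) you take a genuinely more elementary route: a direct index count showing that any $i$ with $x_i$ and $y_i$ on opposite sides of $\partial \bar B_r(x)$ places $y_i$ in the shell $r-\delta<|y_i-x|\le r+\delta$, whose $\mu_{Y_N}$-mass is then compared to its $\rho$-mass at the price of $2\disc(\mu_{Y_N},\rho)$, the factor $2$ being absorbed into $c$. The source instead proves \eqref{eq:dmunu} through a dual characterization of the discrepancy over radial test functions $\phi$ with $\|\phi\|_X=\int_0^{+\infty}|\phi'(r)|\de r\le 1$, combined with the shifted function $\phi_\delta(r)=\phi^+(r+\delta)-\phi^-(r-\delta)$ built from the monotone parts of $\phi$ --- precisely the machinery quoted in this paper's appendix. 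Your counting argument buys transparency and avoids that functional apparatus, but it is tied to empirical measures with a fixed pointwise pairing $|x_i-y_i|\le\delta$; the BV-duality formulation is what lets the appendix extend \eqref{eq:dmunu} to push-forwards $f^2_\tau$ and $\tilde f_\tau$ of a common measure along two flows that remain $\delta$-close, a setting your shell count does not cover directly (though it is not needed for the proposition as stated). The only loose ends in your write-up are routine: the degenerate radii ($r\le\eta$ or $r\le\delta$) and the regime $\delta\ge R$ are absorbed by the same shell lemma, since $\mathrm{vol}(\bar B_R)\le cR^{d-1}\delta$ once $\delta\ge R$.
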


  In the following lemma, we summarize the technical details of the proofs
  concerning the Lipschitz estimates of the interaction function $W$ defined in \eqref{eq:interazione}.

  \begin{lem}
    \label{lemma}
   Given $r_x, r_v>0$,
    let $f_1, f_2 \in  \mathcal P(\R^d\times \R^d)$ be
    two probability
    measures supported in $\bar B_{r_x}\times \bar B_{r_v}$
    and $\rho_1, \rho_2 \in \mathcal P(\R^d)$ two probability
    measures supported in $\bar B_{r_x}$ and such that $\|\rho_2\|_{\infty}
    < +\infty$.
    For any $\xi_1,\xi_2 \in \bar B_{r_x}$ and $\eta_1,\eta_2 \in  \bar B_{r_v}$
    it holds:
    \begin{equation*}
      \begin{aligned}
        |W&[\rho_1,f_1](\xi_1,\eta_1) - W[\rho_2,f_2](\xi_2,\eta_2)| \le  
        2r_v \mathrm{Lip}(K)\disc(\rho_1,\rho_2) + \\
        &(c\mathrm{Lip}(K)\|\rho_2\|_{\infty} r_x^{d-1} r_v +
        \|K\|_{\infty})\left( \wass_1(f_1,f_2) + |\xi_1-\xi_2|+|\eta_1-\eta_2|\right),
      \end{aligned}
    \end{equation*}
    where $c$ is a constant that depends only on $d$.
  \end{lem}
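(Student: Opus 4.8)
The plan is to estimate the difference $W[\rho_1,f_1](\xi_1,\eta_1) - W[\rho_2,f_2](\xi_2,\eta_2)$ by interpolating through intermediate terms, isolating the dependence on each argument. Recall from \eqref{eq:interazione} that
\begin{equation*}
W[\rho,f](\xi,\eta) = \int_{\R^d\times\R^d} K\bigl(M[\rho](\xi,|\xi-y|)\bigr)\,(w-\eta)\,f(y,w)\,\de y\,\de w .
\end{equation*}
The natural splitting is: first vary the measure $\rho$ inside $K\circ M$ (keeping $f_1,\xi_1,\eta_1$ fixed), then vary $f$ (the integrating measure and the factor $w-\eta$), and finally vary the base point $(\xi,\eta)$. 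Each piece will contribute one of the terms in the claimed bound, so I would organize the proof as three lemma-internal estimates and then combine them by the triangle inequality.

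For the $\rho$-dependence I would use that $K$ is Lipschitz, so $|K(M[\rho_1](\xi,s)) - K(M[\rho_2](\xi,s))| \le \mathrm{Lip}(K)\,|M[\rho_1](\xi,s) - M[\rho_2](\xi,s)|$, and by definition \eqref{eq:Mrho} the quantity $|M[\rho_1](\xi,s)-M[\rho_2](\xi,s)| = \bigl|\int_{\bar B_s(\xi)}\de\rho_1 - \int_{\bar B_s(\xi)}\de\rho_2\bigr|$ is bounded by $\disc(\rho_1,\rho_2)$ uniformly in $\xi$ and $s$, directly from \eqref{eq:discr}. Since $f_1$ is a probability measure supported where $|w|\le r_v$ and $|\eta_1|\le r_v$, the factor $|w-\eta_1|\le 2r_v$ integrates to the prefactor $2r_v\,\mathrm{Lip}(K)$, giving the first term $2r_v\,\mathrm{Lip}(K)\,\disc(\rho_1,\rho_2)$. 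This step is clean precisely because the discrepancy distance is tailored to control differences of cumulative ball-masses, which is exactly what $M[\rho]$ computes.

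The $\xi$-dependence is the delicate step and I expect it to be the main obstacle. The difficulty is that $K(M[\rho_2](\xi,|\xi-y|))$ is only piecewise continuous in $\xi$ through the radius $|\xi-y|$ and the base point of the ball, so one cannot simply differentiate. The idea is to compare $M[\rho_2](\xi_1,|\xi_1-y|)$ with $M[\rho_2](\xi_2,|\xi_2-y|)$ by bounding the $\rho_2$-mass of the symmetric difference of the two balls $\bar B_{|\xi_1-y|}(\xi_1)$ and $\bar B_{|\xi_2-y|}(\xi_2)$; because $\rho_2\in L^\infty$ and is supported in $\bar B_{r_x}$, this mass is controlled by $\|\rho_2\|_\infty$ times the volume of a thin annular/shifted region of width $O(|\xi_1-\xi_2|)$, producing a factor $c\,r_x^{d-1}\,|\xi_1-\xi_2|$. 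Composing with $\mathrm{Lip}(K)$ and integrating the $|w-\eta_1|\le 2r_v$ factor against $f_2$ yields the contribution $c\,\mathrm{Lip}(K)\,\|\rho_2\|_\infty\, r_x^{d-1}\, r_v\,|\xi_1-\xi_2|$. This absolute-continuity-of-$\rho_2$ argument is the crux, and I would carry it out carefully, noting that it mirrors the mechanism already used in estimate \eqref{eq:dw1} of the preceding Proposition.

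Finally, the $f$-dependence and the $\eta$-dependence are comparatively routine. Writing $W[\rho_2,f_1](\xi_2,\eta_1) - W[\rho_2,f_2](\xi_2,\eta_1)$ as the integral of the fixed function $(y,w)\mapsto K(M[\rho_2](\xi_2,|\xi_2-y|))(w-\eta_1)$ against $f_1-f_2$, one bounds it by the $\wass_1$-distance times the Lipschitz constant of this test function; since $\|K\|_\infty$ bounds the kernel and $w\mapsto w-\eta_1$ is $1$-Lipschitz with the $y$-dependence again Lipschitz through the annulus argument, the combined Lipschitz constant is of order $\|K\|_\infty + c\,\mathrm{Lip}(K)\,\|\rho_2\|_\infty r_x^{d-1}r_v$, contributing the $\wass_1(f_1,f_2)$ term. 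The $\eta$-difference $|W[\rho_2,f_2](\xi_2,\eta_1)-W[\rho_2,f_2](\xi_2,\eta_2)|$ reduces to $\bigl|\int K(\cdots)(\eta_2-\eta_1)f_2\bigr|\le \|K\|_\infty|\eta_1-\eta_2|$, supplying the last term. Collecting the four contributions and grouping the common prefactor $c\,\mathrm{Lip}(K)\,\|\rho_2\|_\infty r_x^{d-1}r_v + \|K\|_\infty$ in front of $\wass_1(f_1,f_2)+|\xi_1-\xi_2|+|\eta_1-\eta_2|$ gives exactly the stated inequality.
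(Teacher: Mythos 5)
Your proposal is correct and follows essentially the same route as the paper's proof: a triangle-inequality decomposition varying one argument at a time, with the discrepancy distance controlling the $\rho$-difference, the $L^\infty$-bound on $\rho_2$ giving Lipschitz control of $M[\rho_2]$ in centre and radius (your symmetric-difference-of-balls argument is precisely the mechanism behind the paper's estimates \eqref{Lipy}--\eqref{Lipx}), and the Lipschitz--Wasserstein pairing handling the $f$-difference. The only cosmetic deviations are that you split into four terms where the paper merges the $(\xi,\eta)$-variation into a single term $A_3$, plus a harmless slip where your $\xi$-step integrates the factor $|w-\eta_1|\le 2r_v$ against $f_2$ rather than $f_1$, which is immaterial since both are probability measures supported in $\bar B_{r_x}\times\bar B_{r_v}$.
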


\begin{proof}[Proof of Lemma \ref{lemma}]
  By the triangle inequality,
  $ |W[\rho_1,f_1](\xi_1,\eta_1) - W[\rho_2,f_2](\xi_2,\eta_2)| $ can be bounded
  by the sum of
  $$
  \begin{aligned}
    A_1 &\coloneqq |W[\rho_1,f_1](\xi_1,\eta_1) - W[\rho_2,f_1](\xi_1,\eta_1) |,\\
    A_2 &\coloneqq |W[\rho_2,f_1](\xi_1,\eta_1) - W[\rho_2,f_2](\xi_1,\eta_1) |,\\
    A_3 &\coloneqq |W[\rho_2,f_2](\xi_1,\eta_1) - W[\rho_2,f_2](\xi_2,\eta_2) |.\\
  \end{aligned}
$$
Estimate of $A_1$:
by the definition of $M$ in \eqref{eq:Mrho} 
and the definition of
discrepancy distance in \eqref{eq:discr}, we have for any $r\ge 0$
$$|M[\rho_1](\xi_1,r) - M[\rho_2](\xi_1,r)|\le \disc (\rho_1,\rho_2).
$$
By the definition of $W$ in \eqref{eq:interazione}
and the hypothesis on
the supports,
we get that $A_1$ is bounded by
$$
A_1 \le 2\text{Lip}(K)r_v \disc (\rho_1,\rho_2).
$$
Estimates of $A_2$ and $A_3$:
it is easy to prove that,
given $x \in \mathbb{R}^d$ and $r_1, r_2>0$, 
\be
\label{Lipy}
\left|M[\rho_2](x, r_1) - M[\rho_2](x,r_2)\right| \le c\|\rho_2\|_{\infty} \left|r_1^d-r_2^d\right|
\ee
and, given $x_1, x_2 \in \mathbb{R}^d$ and $r>0$,
\be
\label{Lipx}
\left|M[\rho_2](x_1, r)- M[\rho_2](x_2, r)\right| \le c d\|\rho_2\|_{\infty} r^{d-1}|x_1-x_2|.
\ee
By \eqref{Lipy} and \eqref{Lipx},
the Lipschitz constants of the function  
$$K\Bigl(M[\rho_2](x,|x-\xi|)\Bigr)(v-\eta)$$
in the variables $x,\xi\in \bar B_{r_x}$ and $v,\eta \in \bar B_{r_v}$
are 
bounded by $c\mathrm{Lip}(K) \|\rho_2\|_{\infty} r_x^{d-1}r_v$
and $\|K\|_{\infty}$, respectively.
Then, by the definition \eqref{def:wass} of Wasserstein distance,
$$A_2 \le (c\mathrm{Lip}(K)
\|\rho_2\|_{\infty} r_x^{d-1}r_v+\|K\|_{\infty})\wass_1(f_1,f_2),
$$
and
$$A_3 \le
 (c\mathrm{Lip}(K)
 \|\rho_2\|_{\infty} r_x^{d-1}r_v+\|K\|_{\infty})(|\xi_1-\xi_2|+|\eta_1-\eta_2|).
 $$
\end{proof}

\begin{proof}[Proof of Theorem \ref{thm1}]
    For any 
    $\Sigma_N=(y_1,\dots, y_N, w_1, \dots, w_N)\in \R^{dN}\times
    \R^{dN},$
    we consider
    $$\Sigma_N(t,\Sigma_N)=(y_1(t),\dots, y_N(t), w_1(t), \dots, w_N(t))
    \in \R^{dN} \times \R^{dN},$$
    where, for $i\in \{1,\dots, N\}$, $(y_i(t),w_i(t))$ has
     initial datum $(y_i,w_i)$ and
    evolves independently with the 
    mean-field interaction:
    \begin{equation}
      \label{eq:intermediate}
      \left\{
        \begin{aligned}
          &\dot y_i(t)=w_i(t)\\
          &\dot w_i(t)= W[Sf_t, f_t](y_i(t), w_i(t)), \quad i=1, \dots, N.
        \end{aligned}
      \right.
    \end{equation}
    We associate to such $\Sigma_N(t,\Sigma_N)$ the empirical measure 
    $\mu_{\Sigma_N(t)}$ as in \eqref{empirica}.
    
    We define a coupling $\pi^N_t$ between 
    $F_N(t)$ and $f_t^{\otimes N}$ in the following way: at time $t=0$, 
    it is given by
    $$
     \pi^N_0(Z_N, \Sigma_N)
      \coloneqq f_0^{\otimes N}(Z_N) \delta(Z_N - \Sigma_N).
$$
For positive times, $\pi^N_t$ is given by the push-forward of $\pi^N_0$
along the product of the flows given by \eqref{eq:agenti} and \eqref{eq:intermediate}, \ie for any $\varphi \in C_b(\R^{2dN}\times \R^{2dN})$
  $$\int \varphi(Z_N,\Sigma_N)  \de \pi_t^N (Z_N, \Sigma_N) =
  \int \varphi(Z_N(t,Z_N),\Sigma_N(t,Z_N)) \, f_0^{\otimes N}(Z_N) \de Z_N.
  $$
  
Next, given $i \in \{1, \dots, N \}$, we introduce the quantity
\[
 D_N(t)\coloneqq \int \Big(|x_i - y_i| + |v_i - w_i| \Big)\de \pi^N_t(Z_N, \Sigma_N),
\]
which does not depend on $i$, thanks to the symmetry of the law.
% Then, the weak convergence of the first marginal $F^N_1$ to $f$ follows from $ D_N(t) \to 0$
% and the convergence of the $s$-marginals $F^N_s$ to $f^{\otimes s}$ claimed in \eqref{main}
% is easily recovered by the fact that, using the symmetry of $\pi^N$,
We prove
the weak convergence of the $s$-marginals $F_{N:s}$ to $f^{\otimes s}$
by showing that
%is a consequence of the fact that
$D_N(t)\to 0$:
namely, using the symmetry of $\pi_t^N$,
\begin{align*}
  \wass_1(F_{N:s}(t), f_t^{\otimes s})\le
  \sum_{i=1}^s \int \Big(|x_i-y_i| + |v_i - w_i|\Big) \de
  \pi_t^{N}(Z_N, \Sigma_N)\le sD_N(t),
\end{align*}
where we used that
$
| Z_s - \Sigma_s| \le \sum_{i=1}^s (|x_i - y_i|_d + |v_i-w_i|_d)$.

\bigskip 

From the definition of $\pi_t$,
we have, for any $ i \in \{1, \dots, N\}$,
\[
D_N(t)=\int \Big(|x_i(t) - y_i(t)| + |v_i(t) - w_i(t)|\Big)\de f_0^{ \otimes N}(Z_N),
\]
where $(x_i(t), v_i(t))$ for $i=1,\dots N$ solves
  \eqref{eq:agenti} with initial datum $Z_N$,
  and $(y_i(t), w_i(t))$ for $i=1,\dots N$ solves the
  decoupled system \eqref{eq:intermediate} with the same
  initial datum $Z_N$.
It follows that
\begin{equation}
  \label{Ddelta}
D_N(t) \le \int \delta(t,Z_N) \de f_0^{ \otimes N}(Z_N),
\end{equation}
with $\delta(t,Z_N) :=
\text{max}_{i=1, \dots, N} (|x_i(t)-y_i(t)| + |v_i(t) - w_i(t)|)$.

Since $(x_i(t), v_i(t))$ and $(y_i(t), w_i(t))$ have the same initial conditions, it holds that
\begin{align*}
  |x_i(t) - y_i(t)|
  &+ |v_i(t) - w_i(t)| \le \int_0^t |v_i(\tau) - w_i(\tau)| \de \tau
  \\
  &+ \int_0^t \Big|W[S\mu_{Z_N(\tau)},\mu_{Z_N(\tau)}]
    (x_i(\tau),v_i(\tau))  - W[Sf_\tau, f_\tau]
    (y_i(\tau), w_i(\tau)) \Big|\de \tau.
\end{align*}
By Lemma \ref{lemma} with $r_v = R_v$,  $r_x = R_x+\tau R_x$,
%and, ordered, $\rho_1,f_1,\xi_1,\eta_1,\rho_2,f_2,\xi_2,\eta_2$ equal
%to
%$S\mu_{Z_N(\tau)},\mu_{Z_N(\tau)},   x_i(\tau),v_i(\tau) ,
%Sf_\tau, f_\tau,y_i(\tau), w_i(\tau)$, 
we bound the last integrand by
$$\begin{aligned}
    &2R_v \mathrm{Lip}(K) \disc (S\mu_{Z_N(\tau)}, Sf_\tau) +\\
    &(c\mathrm{Lip}(K)\|Sf_\tau\|_{\infty} (R_x+\tau R_v)^{d-1} R_v +
  \|K\|_{\infty})\big( \wass_1(\mu_{Z_N(\tau)},f_\tau)+ \delta(\tau,Z_N)\big).
\end{aligned}
$$
By choosing the coupling $\pi =
\frac 1N \sum_{i=1}^N \delta_{x_i(\tau)}\delta_{v_i(\tau)}\delta_{y_i(\tau)}\delta_{w_i(\tau)}$ in definition \eqref{def:wass}, we can estimate
the Wasserstein distance between the two empirical
measures $\mu_{Z_N(\tau)}$ and $\mu_{\Sigma_N(\tau)}$
by $\delta(\tau,Z_N)$, 
so that 
$$\wass_1(\mu_{Z_N(\tau)},f_\tau) \le \delta(\tau,Z_N) + 
\wass_1(\mu_{\Sigma_N(\tau)},f_\tau) .$$
By the triangle inequality
$$\disc (S\mu_{Z_N(\tau)},Sf_\tau) \le
\disc (S\mu_{Z_N(\tau)},S\mu_{\Sigma_N(\tau)})+
\disc (S\mu_{\Sigma_N(\tau)},Sf_\tau),
$$
and, by \eqref{eq:dmunu} with
$\rho =Sf_\tau$, %$\nu^N = S\nu^N_\tau$ and $\mu^N = S\mu^N_\tau$,
$$\disc (S\mu_{\Sigma_N(\tau)}, S\mu_{Z_N(\tau)})\le 
 c(R_x + \tau R_v)^{d-1} \|Sf_\tau\|_{\infty}\delta(\tau,Z_N)  +
 c \disc (Sf_\tau, S\mu_{\Sigma_N(\tau)}),$$
   where, by \eqref{eq:dw1},
   $$\disc (Sf_\tau,S\mu_{\Sigma_N(\tau)})\le c
   \sqrt{
     (R_x + \tau R_v)^{d-1}\|Sf_\tau\|_{\infty}
     \wass_1(Sf_\tau,S\mu_{\Sigma_N(\tau)})}.$$

   In the sequel we indicate by $\lambda(\tau)$ any positive, increasing
   polynomial function of $\tau$, that
   depends on $d$, $R_v$, $R_v$, \ie on the support
   of $f_\tau$ and $Sf_\tau$, and with $c$ any constant that depends
   at most on the dimension $d$.

Collecting the previous estimates, and using that
$x^{1/2} \le (1+x)/2$ for $x\ge 0$, we get, for a suitable $\lambda(\tau)$,
  \begin{align*}
    &\delta(t,Z_N) 
      \le \\
    &\int_0^t \mathcal {K} \lambda(\tau)
      (1+\|Sf_\tau\|_\infty) \Big( \delta(\tau,Z_N) + 
    \wass_1(\mu_{\Sigma_N(\tau)},f_\tau)
    + \sqrt{\wass_1(S\mu_{\Sigma_N(\tau)},S f_\tau)}\Big) \de \tau,
  \end{align*}
where we used that $\mathcal{K} = \max (1,\mathrm{Lip}(K),\|K\|_\infty)$.
Note that $\|Sf_\tau\|_{\infty}\le cR_v^d \|f_\tau\|_{\infty}$ and,
by Theorem \ref{teo:bcr},
$\|f_\tau\|_\infty \le \e^{d \gamma \tau} \|f_0\|_\infty$.
%so that
%$$\mathcal K(\tau) \le \mathcal {K}\lambda(T) (1+ \e^{d\gamma T}
%\|f_0\|_{\infty})
%$$
%for some constant $\lambda(T)$.
%}
By the Grönwall's lemma and the fact that
  $x< \e^{x}$, 
  \begin{equation}
    \label{eq:deltafinale}
\delta(t,Z_N) \le
\e^{\mathcal {K}\lambda(T)(1+\|f_0\|_{\infty}\e^{d\gamma T})}
\int_0^t \Big(  
\wass_1(\mu_{\Sigma_N(\tau)},f_\tau) +
\sqrt{\wass_1(S\mu_{\Sigma_N(\tau)},S f_\tau)}
 \Big)
\de \tau.
\end{equation}
  To estimate \eqref{Ddelta},
  we have to evaluate the expected value of the integrand in
    \eqref{eq:deltafinale} w.r.t.
    the probability measure  $\de f_0^{\otimes N}(Z_N)$.
    Since the empirical measure $\mu_{\Sigma_N(\tau)}$
    is supported in 
    $\Sigma_N(t,Z_N)$, then
by \eqref{pushforward},
\begin{align*}
   \int \wass_1(\mu_{\Sigma_N(\tau)},f_\tau)
   \de f_0^{\otimes N}(Z_N)=
   \int \wass_1(\mu_{Z_N},f_\tau)
   \de f_\tau^{\otimes N}(Z_N) \le \lambda(\tau)
   C_{2d}(N),
 \end{align*}
 where the rate $C_d(N)$, as defined in \eqref{fournier},
 is obtained 
 by using the Fournier and Guillin bound in \cite{FG},
 and $\lambda(\tau)$ grows polynomially in $\tau$.
 By concavity we also obtain 
 \begin{align*}
   \int \sqrt{\wass_1(S\mu_{\Sigma_N(\tau)},Sf_\tau)}
   \de f_0^{\otimes N}(Z_N)
&=
   \int \sqrt{\wass_1(S\mu_{Z_N},Sf_\tau)}
   \de f_\tau^{\otimes N}(Z_N)\\
& \le \lambda(\tau)\sqrt{ C_{d}(N)}.
 \end{align*}
 We finally arrive at
 \begin{align*}
   D_N(t)\le
\e^{
\mathcal{K}\lambda(T)
(1+
   \|f_0\|_{\infty}\e^{d\gamma T})
}
\left(\sqrt{C_d(N)} + C_{2d}(N)\right),
 \end{align*}
from which the thesis follows.
\end{proof}

\section{Euler systems associated to monokinetic initial data}
\label{sec:4}

In this section we study the hydrodynamic description of the topological Cucker-Smale model, by considering the following 
pressureless Euler-type system for $(\rho(t,x), u(t,x)) :
[0,T]\times \R^d \to \R \times \R^d$:
\begin{equation}
  \label{eulereq}
  \left\{
  \begin{aligned}
    &\partial_t \rho(t,x)+ \nabla_x \cdot (\rho(t,x) u(t,x))=0 \\
    &\partial_t u(t,x) + (u(t,x) \cdot \nabla_x)u(t,x) =\\
    &\hskip2cm    \int_{\R^d}
     K\Big(M[\rho(t)](x,|x-y|)\Big)(u(t, y)-u(t, x))\rho(t, y) \de y \\
    &(\rho(0,x), u(0,x))=(\rho_0(x), u_0(x)),
  \end{aligned}
  \right.
\end{equation}
where $K(M[\rho])$ is defined as in \eqref{eq:interazione} and \eqref{eq:Mrho},
and
$(\rho_0(x),u_0(x))$ is a regular compactly supported initial datum.

In the following, given $T>0$, we assume that \eqref{eulereq} admits a unique solution
$(\rho(t,x),u(t,x))$ which is regular and compactly supported for $t \in [0, T]$. We leave this fact as a hypothesis, however see \cite{HKK} for a proof in the case of the non-topological Cucker Smale model (see also \cite{KMT, T, LT}).

It is not difficult to show that, if $(\rho(t,x),u(t,x))$ is a regular solution of \eqref{eulereq}, then
\begin{equation}
\label{monokinetic}
f(t,x,v):= \rho(t,x) \delta(v-u(t, x))
\end{equation}
is a weak solution of the topological Vlasov equation in the sense of 
\eqref{pushforward}, \eqref{eq:flusso} with measure-valued initial datum $f(0,x,v)=\rho_0(x) \delta(v-u_0(x))$: these are called monokinetic solutions.
Note that in this case the field $W[Sf_t,f_t](x,v)$ is regular,
so the flow in \eqref{eq:flusso}
is well-defined.

We will show how to obtain
solutions of the Euler system \eqref{eulereq}
starting from solutions of
the topological Liouville equation \eqref{Liouville}.
The main obstacle is that, as stated in Theorem \ref{teo:bcr}, the
  flow for the particle system
  is not defined for every initial datum, so
in general it is not possible to consider a measure-valued solution of
the Liouville equation \eqref{Liouville}.
 To overcome this problem we consider a regularization of the monokinetic initial datum:
\begin{equation}
\label{regmono}
f_0^\eps(x,v)=(\rho_0(x)\delta(v-u_0(x)))*_{x,v}\eta_\eps(x,v),
\end{equation}
where $\eta$ is a $C^{\infty}(\R^{2d})$ compactly supported mollifier and $\eta_\eps(z)=\eta(z/\eps)/{\eps}^{2d}$.

%In the same way, it doesn't seem clear if we can relax a mollification of a monokinetic initial condition.$
Our next result shows that after relaxing the
regularity of the initial monokinetic measure
 as in \eqref{regmono},
in the limit $N\to +\infty$, uniformly in $\eps$, 
the marginals of solutions of the Liouville equation tend, in
weak sense,
to the tensorial powers of a
monokinetic measure \eqref{monokinetic}, built up out of the solution of
the associated Euler system.

\begin{thm}[From particles to Euler]\label{partoeuler}
  Let us consider a regular solution $(\rho(t,x),u(t,x)),
  \ t\in[0,T]$, of the Euler system \eqref{eulereq} with
  regular compactly supported initial data $(\rho_0(x,v),u_0(x,v))$.
  We suppose moreover that, 
   for any $t\in [0,T]$
   the supports of $\rho(t,x)$ and $u(t,x)$
   are contained in $\bar B_{R_x +t R_v}$
   and that $\|u(t,x)\|_\infty \le R_v$, for some $R_x, R_v>0$.
 Let 
  $F^{\eps}_N(t)$ be a weak
  solution of the Liouville equation \eqref{Liouville}
  with initial datum
  $
  F^\eps_N(0):=(f_0^\eps)^{\otimes N},
  $
with $f_0^\eps$ as in \eqref{regmono}.

Then, for any $t\in[0,T]$ and any $s \in \{1, \dots, N\}$, setting
$V_s=\left (v_1, \dots, v_s\right)$, we have
$$
\wass_1\left(\int_{\R^{ds}}F^\eps_{N:s}(t)  \de V_s,\rho(t)^{\otimes s}\right)
\le s\e^{ \mathcal {K} \lambda(T) \int_0^T (1+\|\rho_\tau\|_\infty) \de \tau}\left (\sqrt{C_{d}(N)}+ \sqrt[4]{\eps}\right),
$$
and, for any Lipschitz function $\Phi:\R^{sd}\times \R^{sd}\to \R$,
setting $X_s=\left (x_1, \dots, x_s\right)$,
\begin{align*}
  &\Bigg | \int \de X_s\de V_s \Phi(X_s,V_s) F^\eps_{N:s}(t,X_s,V_s)
  - \int \de X_s \prod_{i=1}^s \rho(t,x_i)\Phi(X_s,u_1(t,x_1)\dots u_s(t,x_s))
\Biggr| \\
  &\le \mathrm{Lip}(\Phi)
    s\e^{ \mathcal {K} \lambda(T) \int_0^T (1+\|\rho_\tau\|_\infty) \de \tau}\left (\sqrt{C_{d}(N)}+ \sqrt[4]{\eps} \right),
\end{align*}
where $\lambda(T)$ is a constant depending on $d, R_x,R _v, T$ and
$C_d(N)$ is defined in \eqref{fournier}.

\end{thm}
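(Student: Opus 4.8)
The plan is to mimic the coupling argument of Theorem \ref{thm1}, but with one essential modification dictated by the fact that $\|f_0^\eps\|_\infty\sim\eps^{-d}$ blows up as $\eps\to0$: I will never invoke Theorem \ref{thm1} as a black box, since its constant depends on $\|f_0^\eps\|_\infty$, and instead compare the true $N$-particle dynamics directly against the \emph{monokinetic} mean-field flow. Since $f_0^\eps\in L^\infty$ is a genuine probability density, the system \eqref{eq:agenti} is well-defined for $(f_0^\eps)^{\otimes N}$-a.e.\ datum by Theorem \ref{teo:bcr}, so $F_N^\eps(t)$ is the push-forward of $(f_0^\eps)^{\otimes N}$ along \eqref{eq:agenti}. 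I introduce the decoupled flow $\Sigma_N(t)=(y_i(t),w_i(t))_i$ solving $\dot y_i=w_i$, $\dot w_i=W[\rho_\tau,f_\tau](y_i,w_i)$, with $f_\tau=\rho(\tau,\cdot)\delta(v-u(\tau,\cdot))$ the monokinetic solution, started from the \emph{same} sample $(x_i,v_i)\sim f_0^\eps$. The whole point is that this force is regular: since $Sf_\tau=\rho_\tau\in L^\infty$, Lemma \ref{lemma} applied with $\rho_2=\rho_\tau$ produces Lipschitz constants controlled by $\|\rho_\tau\|_\infty$ rather than by $\|f_0^\eps\|_\infty$, which is exactly what makes the final exponential depend on $\int_0^T(1+\|\rho_\tau\|_\infty)\de\tau$ uniformly in $\eps$.

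Next I would set $\delta(t,Z_N)=\max_i(|x_i(t)-y_i(t)|+|v_i(t)-w_i(t)|)$ and run Grönwall exactly as in Theorem \ref{thm1}. Applying Lemma \ref{lemma} with $r_x=R_x+\tau R_v$, $r_v=R_v$, $\rho_1=S\mu_{Z_N(\tau)}$, $\rho_2=\rho_\tau$, $f_1=\mu_{Z_N(\tau)}$, $f_2=f_\tau$, the acceleration difference is bounded by $2R_v\,\mathrm{Lip}(K)\,\disc(S\mu_{Z_N(\tau)},\rho_\tau)$ plus $\mathcal K\lambda(\tau)(1+\|\rho_\tau\|_\infty)(\wass_1(\mu_{Z_N(\tau)},f_\tau)+\delta(\tau,Z_N))$. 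The coupling with matched indices gives $\wass_1(\mu_{Z_N(\tau)},\mu_{\Sigma_N(\tau)})\le\delta(\tau,Z_N)$, and the discrepancy is split through $\mu_{\Sigma_N(\tau)}$ using \eqref{eq:dmunu} with $\rho=\rho_\tau$ and then \eqref{eq:dw1}, so that the only ``external'' quantities surviving are $\wass_1(\mu_{\Sigma_N(\tau)},f_\tau)$ and $\sqrt{\wass_1(S\mu_{\Sigma_N(\tau)},\rho_\tau)}$. Grönwall and $x^{1/2}\le(1+x)/2$ then yield, in analogy with \eqref{eq:deltafinale}, a bound $\delta(t,Z_N)\le\e^{\mathcal K\lambda(T)\int_0^T(1+\|\rho_\tau\|_\infty)\de\tau}\int_0^t\big(\wass_1(\mu_{\Sigma_N(\tau)},f_\tau)+\sqrt{\wass_1(S\mu_{\Sigma_N(\tau)},\rho_\tau)}\big)\de\tau$.

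Then I take the expectation over $(x_i,v_i)\sim f_0^\eps$. The decisive observation is that $(y_i(\tau),w_i(\tau))$ are i.i.d.\ samples of $g_\tau^\eps:=(\Psi_\tau)_\#f_0^\eps$, where $\Psi_\tau$ is the monokinetic flow, so the Fournier--Guillin bound of the Remark after Theorem \ref{thm1}---whose constant depends only on the uniformly bounded support diameter and on $d$, hence \emph{uniformly in $\eps$}---gives $\mathbb{E}[\wass_1(\mu_{\Sigma_N(\tau)},g_\tau^\eps)]\le\lambda(\tau)C_{2d}(N)$ and $\mathbb{E}[\wass_1(S\mu_{\Sigma_N(\tau)},Sg_\tau^\eps)]\le\lambda(\tau)C_{d}(N)$. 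The gap between $g_\tau^\eps$ and $f_\tau$ is the mollification error: the monokinetic field being Lipschitz, $\Psi_\tau$ is Lipschitz and $\wass_1(g_\tau^\eps,f_\tau)\le\e^{c\int_0^\tau\|\rho\|_\infty}\wass_1(f_0^\eps,f_0)=O(\eps)$, since mollification displaces mass by $O(\eps)$. Inserting these into the two terms and using Jensen on the concave square root, the spatial Wasserstein contributes $\sqrt{C_d(N)}$, while the mollification error, passing through the square root of \eqref{eq:dw1}, contributes a power of $\eps$ no worse than the stated $\sqrt[4]{\eps}$; this gives $\mathbb{E}[\delta(t,Z_N)]\le\e^{\mathcal K\lambda(T)\int_0^T(1+\|\rho_\tau\|_\infty)\de\tau}(\sqrt{C_d(N)}+\sqrt[4]{\eps})$. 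The first estimate then follows by projecting the coupling onto the spatial variables: $\wass_1(\int F_{N:s}^\eps\de V_s,(Sg_t^\eps)^{\otimes s})\le s\,\mathbb{E}[\delta(t)]$, and $\wass_1((Sg_t^\eps)^{\otimes s},\rho_t^{\otimes s})\le s\,O(\eps)$ is absorbed into the same rate.

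For the second estimate I would exploit the monokinetic geometry. The flow $\Psi_\tau$ maps the graph $\{v=u_0(x)\}$ onto $\{v=u_\tau(x)\}$, so a sample starting $O(\eps)$ off the graph stays within $O(\eps\,\e^{cT})$ of it; hence $|w_i(t)-u_t(y_i(t))|=O(\eps)$. Combining $|x_i(t)-y_i(t)|+|v_i(t)-w_i(t)|\le\delta(t)$ with the Lipschitz continuity of $u_t$ gives $|v_i(t)-u_t(x_i(t))|\le(1+\mathrm{Lip}(u_t))\delta(t)+O(\eps)$, so testing a Lipschitz $\Phi$ against $F_{N:s}^\eps$ and against $\prod_i\rho_t(x_i)\,\Phi(X_s,u_t(X_s))$ costs at most $\mathrm{Lip}(\Phi)\,s\,(\mathbb{E}[\delta(t)]+O(\eps))$, which is the claimed bound. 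The main obstacle, and the reason the result is not a corollary of Theorem \ref{thm1}, is precisely the velocity-density blow-up: everything hinges on routing every estimate through the \emph{spatial} density $\|\rho_\tau\|_\infty$ (via Lemma \ref{lemma} with $\rho_2=\rho_\tau$ and via the $\eps$-uniform Fournier--Guillin constant), so that neither the exponential prefactor nor the law-of-large-numbers rate degenerates as $\eps\to0$. The secondary technical point to verify is the uniform-in-$\eps$ bound $\|Sg_\tau^\eps\|_\infty=O(1)$, required to apply \eqref{eq:dw1} along the way, which holds because integrating the $O(\eps^{-d})$ phase-space density over the $O(\eps)$-thin transported tube leaves an $O(1)$ spatial marginal.
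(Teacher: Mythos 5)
Your proposal is correct, and it takes a genuinely different route from the paper at the decisive point. The paper also runs the Theorem~\ref{thm1} coupling, but against the decoupled flow generated by the \emph{mollified Vlasov} solution, \ie with force $W[Sf^\eps_t,f^\eps_t]$; it then needs a separate stability result (Proposition~\ref{propstability}, proved in the Appendix via a nontrivial extension of the discrepancy inequality \eqref{eq:dmunu} from empirical measures to general measures) to compare $f^\eps_t$ with the monokinetic $f_t$, and, to keep all constants $\eps$-uniform, it inserts the intermediate force $W[\rho_\tau,\mu_{\Sigma^\eps_N(\tau)}]$ so that every application of Lemma~\ref{lemma}, \eqref{eq:dmunu} and \eqref{eq:dw1} is fed the Euler density $\rho_\tau$ --- the same device you use. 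Your replacement of the mollified Vlasov flow by the \emph{fixed} monokinetic field $W[\rho_\tau,f_\tau]$ buys two things: first, since your auxiliary law $g^\eps_\tau=(\Psi_\tau)_{\#}f^\eps_0$ and $f_\tau=(\Psi_\tau)_{\#}f_0$ are push-forwards of nearby data under the \emph{same} Lipschitz flow (the field is regular because $\rho_\tau\in L^\infty$ and $u$ is regular), the mollification error reduces to the elementary estimate $\wass_1(g^\eps_\tau,f_\tau)\le \mathrm{Lip}(\Psi_\tau)\,\wass_1(f^\eps_0,f_0)=O(\eps)$, so Proposition~\ref{propstability} and the whole Appendix become unnecessary; second, the $\eps$-rate improves, since only one square root (through \eqref{eq:dw1}) acts on $O(\eps)$, giving $O(\sqrt\eps)$ against the paper's $O(\sqrt[4]\eps)$, which loses one root in \eqref{oeps} and another in the discrepancy estimate of $\disc(\rho_\tau,Sf^\eps_\tau)$. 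Everything else --- the matched-index coupling, the splitting of $\disc(S\mu_{Z_N(\tau)},\rho_\tau)$ through $S\mu_{\Sigma_N(\tau)}$, Grönwall, and the $\eps$-uniform Fournier--Guillin step (whose constant indeed depends only on the support diameter and $d$, so no density bound on $g^\eps_\tau$ is required) --- matches the paper's scheme. Two minor remarks: your concern about $\|Sg^\eps_\tau\|_\infty$ is moot in your own argument, because \eqref{eq:dw1} and \eqref{eq:dmunu} are only ever applied with reference density $\rho_\tau$, never $Sg^\eps_\tau$; and your graph-tracking argument for the second estimate is an unneeded detour costing a spurious factor $(1+\mathrm{Lip}(u_t))$ --- once you have the phase-space bound $\wass_1\bigl(F^\eps_{N:s}(t),f_t^{\otimes s}\bigr)\le s\bigl(\mathbb{E}[\delta^\eps(t)]+O(\eps)\bigr)$, both asserted inequalities follow at once, the first by projecting onto the spatial variables and the second by Kantorovich--Rubinstein duality, since the second integral in the statement is exactly $\int\Phi\,\de f_t^{\otimes s}$; this is precisely how the paper reduces the whole theorem to the single estimate \eqref{monokinest}.
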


\begin{oss}
Note that the statement doesn't involve the Vlasov equation and that $\eps$ and $N$ are independent,
so the convergence holds also 
for $N\to\infty$ and any sequence
  of limiting points
  as $\eps\to 0$ of the distributions $\{F^\eps_N(t)\}_{N\in \N}$.
  In particular the rate of convergence is $O(\sqrt{C_d(N)})$
  along any sequence with $\eps = \eps_N=O(C_d(N))^2$.
  Note the algebraic behavior in $N$, in contrast with the logarithmic one,
  when the dynamics is mollified as,
e.g., in \cite{PT, BGSR}.
\end{oss}

\begin{oss}
Along the same lines, we can also generalize the content of Theorem \ref{partoeuler}, proving a convergence result for marginals $F^\eps_{N:s}$ of the Liouville equation
with general initial conditions $(g_0^\eps)^{\otimes N}$ such that $\wass_1\left(g_0^\eps,\rho_0(x)\delta(v-u_0(x))\right)\to 0$ as $\eps \to 0$.
\end{oss}

\begin{comment}
\begin{oss}
With this approach, we can also give a propagation of chaos result for $K$ in \eqref{eq:pij} not regular but only bounded. Notice that in this case,
the existence of the $N$-particle system is always guaranteed by the fact that the communication weights are stepwise continuous functions. 

For $\delta>0$, let us consider a regularization $K_\delta=K*\eta_\delta$ of $K$, with $\eta$ as in \eqref{regmono}. Following the same arguments of the proof of Theorem $3$, we get
\begin{align}
\wass_1(F^\delta_{N:s}(t),f_t^{\otimes s})\le s\e^{C(T) \delta^{-1}\|f_0\|_\infty}\sqrt{C_{2d}(N)}+ O(\delta),
\end{align}
where $O(\delta)\to 0$ as $\delta \to 0$, $F^\delta_{N:s}$ is the $s$-marginal of the solution of the Liouville equation with the regularized interaction $K_\delta$ and $f_t$ is the solution of the kinetic Vlasov equation.
Hence, the result follows by choosing a suitable $\delta(N)$ such that $\delta(N) \to 0$ as $N \to +\infty$.
\end{oss}
\end{comment}

To prove Theorem \ref{partoeuler}, we need the following stability result for solutions of the Vlasov system.

\begin{prop}
\label{propstability}
Given $T>0$, for $i\in\{1,2\}$, let $f^{\text{i}}\in C_w([0,T], \mathcal{P}(\R^d \times \R^d))$ be two weak solutions of the topological Vlasov equation \eqref{eq:principale} with initial data $f^{\text{i}}_0 \in \mathcal{P}(\R^d \times \R^d)$ such that $Sf^{\text{i}}_t$ are well-defined and belong to $L^\infty(\R^d)$. 
Assume moreover that
$
\mathrm{supp} (f^{\text{i}}_t)\subset \bar B_{R_x+ R_v t} \times \bar B_{R_v}
$
for $t>0$ and $R_x, R_v>0$.

Then, for $t \in [0,T]$,
\begin{equation}
\label{stabilitywass}
\wass_1 \left( f^1_t, f^2_t \right) \le
\e^{
  \mathcal {K}  \lambda(T)\min_{i=1,2} \int_0^T(1 +\|Sf^i_\tau\|_\infty)\de \tau }
\max\left \{ \wass_1(f^1_0, f^2_0), \sqrt{\wass_1(f^1_0, f^2_0)} \right \},
\end{equation}
where $\mathcal {K} \coloneqq \max (1, \mathrm{Lip}(K),\|K\|_{\infty})$
and $\lambda(T)$ is a constant depending on $d, R_x, R_v$ and $T$.
\end{prop}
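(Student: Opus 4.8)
The plan is to run a Dobrushin-type coupling argument adapted to the topological interaction, in direct analogy with the proof of Theorem \ref{thm1}: the square-root loss that the discrepancy distance forces upon us must be confined to the \emph{initial} datum, while the rest of the estimate is kept linear. Fix one solution, say $f^2$, as a reference. By Lemma \ref{lemma} (applied with $\rho_1=\rho_2=Sf^2_t$, $f_1=f_2=f^2_t$, so that the $\disc$ and $\wass_1$ terms drop) the field $W[Sf^2_t,f^2_t]$ is Lipschitz in $(x,v)$, so it generates a well-defined Lipschitz flow $\Phi^2_t$; let $\Phi^1_t$ be the analogous flow for $f^1$. I then introduce the auxiliary measure $g_t:=(\Phi^2_t)_{\#}f^1_0$, the initial datum of $f^1$ transported along the flow of $f^2$. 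This is the exact counterpart of the decoupled empirical measure $\mu_{\Sigma_N}$ in the proof of Theorem \ref{thm1}: $g_t$ and $f^2_t$ share the same flow, whereas $f^1_t$ and $g_t$ start from the same datum $f^1_0$ and differ only through the force.

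The first step is to control the uniform-in-phase-space separation of the two flows. Setting
$$
\delta_\infty(t):=\sup_{z\in\mathrm{supp}(f^1_0)}\bigl|\Phi^1_t(z)-\Phi^2_t(z)\bigr|,
$$
one has $\delta_\infty(0)=0$, and differentiating along trajectories and applying Lemma \ref{lemma} with $r_x=R_x+R_v\tau$, $r_v=R_v$, then taking the supremum over $z$, yields a differential inequality of the form
$$
\delta_\infty(t)\le\int_0^t\Bigl[C_\tau\,\delta_\infty(\tau)+2R_v\mathrm{Lip}(K)\,\disc(Sf^1_\tau,Sf^2_\tau)+C_\tau\,\wass_1(f^1_\tau,f^2_\tau)\Bigr]\de\tau,
$$
with $C_\tau\le\mathcal{K}\lambda(\tau)(1+\|Sf^2_\tau\|_\infty)$. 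Working with the supremum rather than a Wasserstein average is exactly what will make the discrepancy contribution linear, in the spirit of item ii) of the Proposition recalled above.

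The heart of the argument is the estimate of $\disc(Sf^1_\tau,Sf^2_\tau)$, obtained by routing it through $g_\tau$:
$$
\disc(Sf^1_\tau,Sf^2_\tau)\le\disc(Sf^1_\tau,Sg_\tau)+\disc(Sg_\tau,Sf^2_\tau).
$$
For the first term, $f^1_\tau$ and $g_\tau$ are push-forwards of the \emph{same} measure $f^1_0$ under maps whose spatial components differ by at most $\delta_\infty(\tau)$ uniformly; since $Sf^1_\tau$ has bounded density, the indicators of any two balls can disagree only on a spherical shell of width $\delta_\infty(\tau)$, whence $\disc(Sf^1_\tau,Sg_\tau)\le c(R_x+R_v\tau)^{d-1}\|Sf^1_\tau\|_\infty\,\delta_\infty(\tau)$, linear in $\delta_\infty$. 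For the second term, $g_\tau$ and $f^2_\tau$ are transported by the same Lipschitz flow, so $\wass_1(Sg_\tau,Sf^2_\tau)\le\wass_1(g_\tau,f^2_\tau)\le\mathrm{Lip}(\Phi^2_\tau)\wass_1(f^1_0,f^2_0)$, and \eqref{eq:dw1} gives $\disc(Sg_\tau,Sf^2_\tau)\le\lambda(\tau)\sqrt{\wass_1(f^1_0,f^2_0)}$. Similarly $\wass_1(f^1_\tau,f^2_\tau)\le\delta_\infty(\tau)+\mathrm{Lip}(\Phi^2_\tau)\wass_1(f^1_0,f^2_0)$.

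Substituting these bounds converts the inequality for $\delta_\infty$ into a genuinely linear integral inequality, with a source term of size $\lambda(\tau)\bigl(\wass_1(f^1_0,f^2_0)+\sqrt{\wass_1(f^1_0,f^2_0)}\bigr)$. Grönwall's lemma (absorbing polynomial factors via $x\le\e^x$, as in the earlier proof) then gives $\delta_\infty(t)\le\e^{\mathcal{K}\lambda(T)\int_0^T(1+\|Sf^2_\tau\|_\infty)\de\tau}\max\{\wass_1(f^1_0,f^2_0),\sqrt{\wass_1(f^1_0,f^2_0)}\}$, and the triangle inequality $\wass_1(f^1_t,f^2_t)\le\delta_\infty(t)+\mathrm{Lip}(\Phi^2_t)\wass_1(f^1_0,f^2_0)$ yields \eqref{stabilitywass}; exchanging the roles of $f^1$ and $f^2$ and keeping the cheaper reference choice produces the $\min_{i=1,2}$. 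The main obstacle is precisely this structural point. Bounding $\disc(Sf^1_\tau,Sf^2_\tau)$ naively by $\sqrt{\wass_1(f^1_\tau,f^2_\tau)}$ at each time closes into a nonlinear inequality $y'\lesssim\sqrt{y}+y$ whose upper solution does \emph{not} vanish as $\wass_1(f^1_0,f^2_0)\to0$, and therefore cannot give the stated bound; the square-root loss through the discrepancy is unavoidable (it is sharp even for bounded densities), so the whole point of the intermediate measure $g_\tau$ is to confine that loss to the initial datum, leaving a linear Grönwall and hence the clean $\max\{\wass_1(f^1_0,f^2_0),\sqrt{\wass_1(f^1_0,f^2_0)}\}$ dependence.
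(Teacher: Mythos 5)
Your overall scheme is close in spirit to the paper's (an intermediate measure transported along a frozen flow, with the square-root loss from the discrepancy confined to the initial datum), and most individual steps are correct: the shell estimate for $\disc(Sf^1_\tau,Sg_\tau)$ is legitimate precisely because $Sf^1_\tau$ has bounded density, and your Grönwall closes. But as written the argument proves a strictly weaker inequality than \eqref{stabilitywass}: it puts $\int_0^T\bigl(1+\|Sf^1_\tau\|_\infty+\|Sf^2_\tau\|_\infty\bigr)\de\tau$ (or the max) in the exponent rather than the min. Trace the norms through your proof: everything attached to the frozen flow $\Phi^2$ --- the constant $C_\tau$ from Lemma \ref{lemma} with $\rho_2=Sf^2_\tau$, the factor $\mathrm{Lip}(\Phi^2_\tau)$ in $\wass_1(Sg_\tau,Sf^2_\tau)\le \mathrm{Lip}(\Phi^2_\tau)\,\wass_1(f^1_0,f^2_0)$, and the application of \eqref{eq:dw1} with $\rho=Sf^2_\tau$ --- costs $\|Sf^2_\tau\|_\infty$, while your shell estimate for $\disc(Sf^1_\tau,Sg_\tau)$ costs $\|Sf^1_\tau\|_\infty$. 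Exchanging the roles of $f^1$ and $f^2$ merely swaps which norm enters where; in either assignment \emph{both} appear, so your closing remark that symmetrization ``produces the $\min_{i=1,2}$'' fails. This is not cosmetic: in the proof of Theorem \ref{partoeuler} the proposition is applied with $f^2=f^\eps$, whose spatial density is not bounded uniformly in $\eps$, and only the min --- evaluated on the monokinetic side, giving $\|\rho_\tau\|_\infty$ in \eqref{oeps} --- keeps the estimate uniform in $\eps$. With your constant the bound degenerates as $\eps\to0$.

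The paper avoids this by a different intermediate dynamics together with one extra tool, and it is instructive to see why both are needed. Its $\tilde f_t$ is the pushforward of $f^2_0$ under the \emph{hybrid} field $W[Sf^1_t,\tilde f_t]$: the rank density is frozen to the cheap solution $Sf^1$, while the alignment part stays self-consistent. Then the Dobrushin comparison of $f^1_t$ with $\tilde f_t$ uses Lemma \ref{lemma} with $\rho_1=\rho_2=Sf^1_\tau$ (the discrepancy term drops), and every Lipschitz constant in the whole proof is priced in $\|Sf^1_\tau\|_\infty$ alone. The price is that one must estimate $\disc(Sf^2_\tau,S\tilde f_\tau)$, a discrepancy between two pushforwards of $f^2_0$, using as reference the bounded density of the \emph{other} solution; this is exactly the nontrivial extension of \eqref{eq:dmunu} via the $\phi_\delta$ machinery of \cite[Lemmas 2.2--2.3]{BCR}, yielding $\disc(Sf^2_\tau,S\tilde f_\tau)\le \lambda(\tau)\delta(\tau)\|Sf^1_\tau\|_\infty+c\,\disc(S\tilde f_\tau,Sf^1_\tau)$. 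Your construction is arranged so that one of the two measures being compared in the discrepancy is itself the bounded one, which is what lets you bypass that extension --- but the bypass is precisely what injects the expensive norm. If you repair your argument by freezing the field to the cheaper solution (reference $Sf^1$), the discrepancy comparison is then between two pushforwards of $f^2_0$ neither of whose densities you may use, and you are forced back to the extension of \eqref{eq:dmunu}; at that point you have essentially reconstructed the paper's proof.
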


The proof, which requires a nontrivial extension of inequality \eqref{eq:dmunu}, is given in the Appendix.

\begin{proof}[Proof of Theorem \ref{partoeuler}]
  To prove the thesis, it is sufficient to establish the following estimate
  for $t \in [0,T]$:
\begin{align}
  \label{monokinest}
  \wass_1(F^\eps_{N:s}(t), f_t^{\otimes s})\le s\e^{\mathcal {K} \lambda(T)
  \int_0^T(1+\|\rho(\tau)\|_\infty) \de \tau}\left (
  \sqrt{C_d(N)}+ \sqrt[4]{\eps} \right),
\end{align}
where $f$ is the monokinetic solution \eqref{monokinetic}
of the Vlasov equation associated
to the solution $(\rho(t), u(t))$ of the Euler system \eqref{eulereq} and
$\lambda(T)$ is a constant that
depends on $d$, $R_v$, $R_x+T R_v$.

Let $f^\eps(t)$ be the solution of the topological Vlasov equation \eqref{eq:principale} with
initial datum $f_0^\eps$.
By the triangle inequality, we have
$$\wass_1\Big(F^\eps_{N:s}(t), f_t^{\otimes s}\Big) \le 
\wass_1\Big(F^\eps_{N:s}(t), (f_t^\eps)^{\otimes s}\Big) + s
\wass_1\Big(f^\eps_t, f_t\Big),$$
where we are using that $\wass_1((f_t^\eps)^{\otimes s}, f_t^{\otimes s})
\le s \wass_1(f_t^\eps, f_t)$.

%We only need to estimate the first term since 
The second term is managed by the stability estimate
\eqref{stabilitywass} in Proposition \ref{propstability},
which gives
  \begin{equation}
    \label{oeps}
    \begin{aligned}
      \wass_1
      \Big(
      f^\eps_t, f_t
      \Big) &\le  
      \e^{
       \mathcal {K} \lambda(T)
          \int_0^T(1+\|\rho(\tau)\|_\infty) \de \tau}
      \max\left \{
        \wass_1(f^\eps_0, f_0), \sqrt{\wass_1(f^\eps_0, f_0)} \right \}.\\
\end{aligned}
\end{equation}
By choosing 
$\pi(\de z,\de z') = \eps^{-2d}\eta((z-z')/\eps) f_0(z) \de z \de z'$
in the definition \eqref{def:wass} of the Wasserstein distance, we
can estimate $\wass_1 (f^\eps_0,f_0)$ with $\eps\int |z| \eta(z) \de z$.
Then for $\eps < 1$,
we estimate the maximum in \eqref{oeps}
by $c\sqrt{\eps}$, where $c$ depends only on $\eta$.

The estimate of the first term is similar to the one given in the proof of
Theorem \ref{thm1}. In this case we have
\begin{equation*}
  \wass_1\Big(F^\eps_{N:s}(t), (f_t^\eps)^{\otimes s}\Big)
  \le s\int_{\R^{2dN}} \delta^\eps(t, Z_N)\de (f^\eps_0)^{ \otimes N}(Z_N),
\end{equation*}
where
$\delta^\eps(t,Z_N) \coloneqq
  \text{max}_{i=1, \dots, N} (|x_i(t)-y^\eps_i(t)| +
|v_i(t) - w^\eps_i(t)|)$,
with the independent flow
$\Sigma^\eps(t)\coloneqq (y^\eps_i(t), w^\eps_i(t))=(y^\eps_i(t,Z_N), w^\eps_i(t,Z_N))$
solving
\begin{equation*}
  \left\{
    \begin{aligned}
      &\dot y^\eps_i(t)=w^\eps_i(t)\\
      &\dot w^\eps_i(t)= W[Sf^\eps_t, f^\eps_t](y^\eps_i(t), w^\eps_i(t)), \quad i=1, \dots, N.
    \end{aligned}
  \right.
\end{equation*}
This time we get
\begin{align*}
  |x_i(t) &- y^\eps_i(t)|
  + |v_i(t) - w^\eps_i(t)| \le \int_0^t |v_i(\tau) - w^\eps_i(\tau)| \de \tau
  \\
  &+ \int_0^t \Big|W[S\mu_{Z_N(\tau)},\mu_{Z_N(\tau)}]
    (x_i(\tau),v_i(\tau))  - W[Sf^\eps_\tau, f^\eps_\tau]
    (y_i^\eps(\tau), w_i^\eps(\tau)) \Big|\de \tau.
\end{align*}
In order to avoid terms in $\|Sf_\tau^\eps\|_{\infty}$, which
could diverge for $\eps\to 0$, 
we use carefully the triangle inequality in the last integrand,
which we 
bound by 
$$
\begin{aligned}
  &|W[S\mu_{Z_N(\tau)},\mu_{Z_N(\tau)}](x_i(\tau),v_i(\tau))
  - W[\rho_\tau,\mu_{\Sigma^\eps_N(\tau)}](x_i(\tau),v_i(\tau))|+ \\
  &| W[\rho_\tau,\mu_{\Sigma^\eps_N(\tau)}](x_i(\tau),v_i(\tau))-
  W[Sf^\eps_\tau, f^\eps_\tau]
  (y_i^\eps(\tau), w_i^\eps(\tau))|.
\end{aligned}
$$
By using that 
$\wass_1(\mu_{Z_N(\tau)}, \mu_{\Sigma^\eps_N(\tau)}) \le \delta^\eps(\tau, Z_N)$
and $\mathcal{K}=\max\{1, \text{Lip}(K), \|K\|_\infty\}$,
by Lemma \ref{lemma},
we estimate the first term with
\begin{equation*}
  2R_v\mathcal K \disc(S\mu_{Z_N(\tau)},\rho_\tau) + \mathcal {K}\lambda(\tau) 
\delta^\eps(\tau,Z_N),\end{equation*}
and the second one with
\begin{equation}
  \label{eq:d2}
  2R_v \disc(\rho_\tau,Sf_\tau^\eps) + \mathcal {K}\lambda(\tau) 
  \big(\wass_1(\mu_{\Sigma^\eps_N(\tau)},f^\eps_\tau)+ \delta^\eps(\tau,Z_N)\big).
\end{equation}
We now estimate the discrepancies in the previous two equations.
By the triangle inequality
$$\disc(S\mu_{Z_N(\tau)},\rho_\tau) \le \disc(S\mu_{Z_N(\tau)},S\mu_{\Sigma^\eps_N(\tau)})
+ \disc(S\mu_{\Sigma^\eps_N(\tau)},\rho_\tau),
$$
in which, by inequality \eqref{eq:dmunu}, 
$$\disc(S\mu_{Z_N(\tau)},S\mu_{\Sigma^\eps_N(\tau)})
  \le \lambda(\tau) \|\rho_\tau\|_{\infty}\delta^\eps(\tau,Z_N)+ 
  c\disc(S\mu_{\Sigma^\eps_N(\tau)},\rho_\tau).$$
  We bound the last term
  by using inequality \eqref{eq:dw1}: 
  $$\disc(S\mu_{\Sigma^\eps_N(\tau)},\rho_\tau)\le 
  \lambda(\tau)  (1+\|\rho_\tau\|_{\infty})\big(
  \sqrt{
    \wass_1 (S\mu_{\Sigma^\eps_N(\tau)},S f^\eps_\tau)}
  +
  \sqrt{
    \wass_1 (Sf^\eps_\tau,\rho_\tau)}
  \Big).$$
  Again by inequality \eqref{eq:dw1}, we estimate the
  discrepancy in \eqref{eq:d2} with 
$$\disc( \rho_\tau , Sf_\tau^\eps)\le
\lambda(\tau)(1+ \|\rho_\tau\|_{\infty})
\sqrt{ \wass_1 ( S f_\tau^\eps,\rho_\tau)}.$$
Note that, by \eqref{oeps},
$\wass_1 (S f_\tau^\eps,\rho_\tau)$
is of order $\sqrt{\eps}$.

Collecting these estimates together, and using Grönwall's lemma,
we arrive at
% \begin{align*}
%   \delta(t,Z_N)
%   &\
%     \le \int_0^t \lambda(\tau)\mathrm{Lip}(K)(1+\|\rho_\tau\|_\infty) \big(
%     \wass_1  (\mu_{\Sigma^\eps_N(\tau)},f^\eps_\tau)
%     + \delta(\tau, Z_N)\big) \de \tau \\
%    &+ \int_0^t  c\mathrm{Lip}(K)R_v 
%     \big( \disc ( S f_\tau , Sf_\tau^\eps) + \disc ( Sf_\tau,
%     S\mu_{\Sigma^\eps_N(\tau)}) \big) \de \tau.
% %&+\bar{\alpha}(t) \Big(\sqrt{\wass_1(Sf_\tau,S f^\eps_\tau)}+
% %  \wass_1(f_\tau,f^\eps_\tau) \Big).
% \end{align*}
% By Gronwall's lemma,
$$
\begin{aligned}
\delta^\eps(t,Z_N) &\le \e^{ \mathcal {K} \lambda(T)\int_0^T (1+ \|\rho_\tau\|_\infty) \de \tau} \\
&\times \Big( \sqrt[4]{\eps} + \int_0^T\de \tau \int \Big(\sqrt{\wass_1 (S f^\eps_\tau,S\mu_{\Sigma^\eps_N(\tau)})}+{\wass_1 (\mu_{\Sigma^\eps_N(\tau)},f^\eps_\tau)}\Big) \de f^{\otimes N}_0(Z_N)  \Big).
\end{aligned}
$$
Using again the Fournier and Guillin bound in
\eqref{fournier} we get the estimate \eqref{monokinest},
from which the thesis follows.

\end{proof}

\begin{comment}
\vskip.3cm Second way.
We can try to estimate directly
$$\wass_1(F^{\eps,N}_s(t), f_t^{\otimes(s)})$$
by chooising
$\pi^N = f_0^N(Z_N) \delta_\eps(Z_N-\Sigma_N)$
where $\delta_\eps$ is a regularization of $\delta$.
The difference in the initial datum should not to be difficult to
handle.
I think that finally  
we arrive to an estimate of
$$\int \wass_1(\mu_{\Sigma_N(\tau)},f_\tau)  \de (f_0^{\eps})^{\otimes N}
(Z_N)$$
Here we use the triangular inequality getting
$$\int \wass_1(\mu_{\Sigma_N(\tau)},f^\eps_\tau)  \de (f_0^{\eps})^{\otimes N}
(Z_N)
+ \wass( f_\tau, f^\eps_\tau).
$$
The second term goes to zero in $\eps$, for the first
we have the estimate in $N$, uniformely in $\eps$.
\end{comment}

 \section*{Appendix: proof of Proposition \ref{propstability}}

   We consider, for $t\in [0,T]$,
   two weak solutions $f^1_t$ and $f^2_t$
   of \eqref{eq:principale}
   in the weak sense specified by
   \eqref{pushforward} and \eqref{eq:flusso},
   with support on $\bar B_{R_x+tR_v}\times \bar B_{R_v}$. We indicate by 
   $Z^i(t,z)$, $i=1,2$, the corresponding flows and
   we assume that $\min_{i=1,2} \int_0^T \|Sf_\tau^i\|_{\infty}\de \tau$
   is attained for $i=1$.
   We define the intermediate dynamics
   $(\tilde f(t,z),\tilde Z(t,z))$,
   where $\tilde f(t)$ is the push-forward of $f^2_0$
   by the flow $\tilde Z(t,z)=(\tilde x(t,x,v),\tilde y(t,x,v))$ which solves
   \begin{equation*}
     \left\{
       \begin{aligned}
         &\dot {\tilde x}(t,x,v)=\tilde v(t,x,v)\\ 
         &\dot {\tilde v}(t,x,v)=W[Sf^1_t,\tilde f_t]
         (\tilde x(t,x,v),\tilde v(t,x,v))  \\
         & \tilde x(0,x,v) = x,\ \ \tilde v(0,x,v) = v.\\
       \end{aligned}
     \right.
   \end{equation*}
   From the hypothesis,
   for $i=1,2$,
   the spatial density $Sf^i_t$ is bounded,
   and then,
  by \eqref{Lipy} and \eqref{Lipx}, the field
  $K\left(M[Sf^i(t)](x,|x-y|)\right)$ is locally Lipschitz in $x$ and $y$.
  Hence $(Z^i(t, z))$, $i=1,2$, exists for any
  $t\in [0,T]$, and also the couple
  $(\tilde f(t, z), \tilde Z(t,z))$ is well-defined.

% Proceeding as in \cite{BCR},
As in previous sections, we indicate by $\lambda(\tau)$
any constant that
depends on $d$, $R_v$, $R_x+\tau R_v$. We have
\begin{equation}
\label{ineqApp}
\wass_1\left(f^1_t, f^2_t \right) \le  \wass_1\left(f^1_t, \tilde f_t\right) +
\wass_1\left(\tilde f_t, f^2_t \right).
\end{equation}
The first term is under control since the continuity estimate
  à la Dobrushin holds. Namely, from Lemma \ref{lemma},
  \begin{equation*}
    | W[Sf^1_\tau,f^1_\tau](x,v) -W[Sf^1_\tau,\tilde f_\tau](x,v)| \le
    \mathcal {K}
    \lambda(\tau)
    (1+ \|Sf^1_\tau \|_\infty)
  \wass_1(f^1_\tau,\tilde f_\tau),
  \end{equation*}
  from which 
  $$
  \begin{aligned}
    |Z^1(t,z)-\tilde Z(t,\tilde z)| &\le |z-\tilde z|
    + \\
    &\int_0^t \mathcal {K}  \lambda (\tau)  (1+\|Sf_\tau^1\|_{\infty})
    \big(
    \wass_1(f^1_\tau,\tilde f_\tau)+|Z^1(\tau,z) - \tilde Z(\tau,\tilde z)|
    \big) \de \tau,
    \end{aligned}$$
  which allows us to obtain
  \begin{equation}
    \label{stimawass1}
    \wass_1\left(f^1_t, \tilde f_t\right) \le \e^{\lambda(T)
     \mathcal {K} \int_0^t(1+\|Sf^1_\tau\|_\infty) \de \tau}
    \wass_1 \left( f^1_0, f^2_0 \right), \quad
    t\in [0,T].
\end{equation}

Concerning the second term in the inequality \eqref{ineqApp}, we have
$$  \wass_1(\tilde f_t, f^2_t)
\le \int_{\R^{2d}} | Z^2(t,z) - \tilde Z(t,z)|
\de f_0^2(z)
 \le \delta(t)
\coloneq \sup_{z \in \mathrm{supp}(f_0^2)} |Z^2(t,z) - \tilde Z(t,z)|.
$$
To estimate  $\delta(t)$, we note that
\[
  |Z^2(t,z) - \tilde Z(t,z)| \le \int_0^t \Big{(}\delta(\tau) +\left
    |W[Sf^2_\tau, f^2_\tau](Z^2(\tau,z))-
    W[ Sf^1_\tau, \tilde f_\tau](\tilde Z(\tau,z))\right |\Big{)} \de \tau.
\]
Using Lemma \ref{lemma},
we estimate the last term in the integrand by
\begin{equation}
  \label{stimaapp2}
  2R_v\mathcal K \disc(Sf^2_\tau, Sf^1_\tau)+\mathcal {K}\lambda(\tau)
  (1+\|Sf^1_\tau\|_\infty)
  (\wass_1(f^2_\tau,\tilde f_\tau) + \delta(\tau)).
\end{equation}

Hence we need only to estimate $\disc( Sf^2_\tau, Sf^1_\tau)$. By the triangle inequality
$$\disc(Sf^2_\tau, Sf^1_\tau) \le \disc(Sf^2_\tau, S\tilde f_\tau) +
\disc(S\tilde f_\tau, Sf^1_\tau).
$$
The second term can be bounded by using inequality \eqref{eq:dw1},
obtaining that
\begin{equation}
\label{stimaapp3}
\disc(S\tilde f_\tau, Sf^1_\tau)\le c \sqrt{\|Sf^1_\tau\|_{\infty}(R_x + \tau R_v)^{d-1}\wass_1(S\tilde f_\tau, S f^1_\tau)}.
\end{equation}

We now establish an estimate for $\disc(Sf^2_\tau, S\tilde f_\tau)$
of the type of inequality  \eqref{eq:dmunu}.
For any $z_0,z\in \R^{2d}$, by definition of $\delta(t)$, 
$$|z_0- Z^2(t,z)|-\delta(t) \le |z_0-\tilde Z(t,z)|
\le |z_0-Z^2(t,z)|+\delta(t). $$
Let $X$ be the set of functions
$\phi\in C([0,+\infty),\R)$, with first derivative
continuous up to a finite number of jumps. 
Given $\phi\in X$ and $\delta >0$, let 
$
\phi_\delta(r)  \coloneq \phi^+ (r+\delta) - \phi^-(r-\delta),
$
where, denoting by $\tilde \phi$ the function
$
\tilde \phi (r) \coloneq \int_0^r |\phi'(s)|\de s,$
we have defined 
$$\phi^\pm (r)\coloneq \left\{
  \begin{aligned}
    &\frac 12 (\tilde \phi(r) \pm \phi(r)),\  &\text{ if }r\ge 0,\\
    &\pm \frac 12 \phi(0), &\text{ if }r<0.
  \end{aligned}
\right.
$$
It can be proved (see  \cite[Lemma 2.2]{BCR}) that 
$$\disc (\rho_1,\rho_2) = \sup_{\phi \in X: \, \|\phi\|_X \le 1}
\sup_x
\int \phi\bigl(|x -y|\bigr)\bigl(\de\rho_1(y)-\de\rho_2(y) \bigr),$$
where $\|\phi\|_X \coloneq \int_0^{+\infty} |\phi'(r)|\de r.$

Fixed $x_0 \in \R^d$, we define $\Phi(x)=\phi(|x-x_0|)$ and $\Phi_\delta$ in the same
way.
Then, it is not difficult to see that (see 
\cite[Lemma 2.3]{BCR})
$$\Phi \left(X^2(t,z)\right)\le \Phi_\delta \left(\tilde X(t,z) \right).$$
Hence 
$$
\begin{aligned}
  &\int \Phi \left(\de Sf^2_\tau - \de S\tilde f_\tau \right) =
  \int \Phi (x) \left( \de f^2(z) - \de \tilde f(z)\right) \\
  &=
  \int \left(\Phi (X^2(\tau,z)) -\Phi (\tilde X(\tau,z)) \right)
  \de f^2_0(z) \\
  &\le
  \int \left(\Phi_\delta (\tilde X(\tau,z)) -\Phi (\tilde X(\tau,z))\right)
  \de f^2_0(z)  =
  \int \left (\Phi_\delta(x) -\Phi(x) \right) \de S\tilde f_\tau(x)
  \\
  & =   \int \left(\Phi_\delta -\Phi \right)
  \left(\de S \tilde f_\tau - \de Sf ^1_\tau\right) +
  \int \left(\Phi_\delta -\Phi \right) \de Sf^1_\tau.
\end{aligned}
$$
Since $\Phi_\delta - \Phi \in X$, the first term is bounded
by $c\disc ( S\tilde f_\tau,Sf^1_\tau)$, while the second can be easily bounded by
$c(R_x+\tau R_v)^{d-1}\delta(\tau) \|Sf^1_\tau\|_{\infty}.$
We conclude that 
\begin{equation}
\label{stimaapp4}
\disc(Sf^2_\tau,S\tilde f_\tau) \le \lambda(\tau)\delta(\tau)\|Sf^1_\tau\|_{\infty}
+ c \disc ( S\tilde f_\tau,Sf^1_\tau).
\end{equation}
Collecting estimates \eqref{stimaapp2}, \eqref{stimaapp3} and \eqref{stimaapp4}, we arrive at
$$
  \begin{aligned}
    &\delta(t) \le \int_0^t \lambda(\tau) \mathcal {K}
    (1+\|Sf^1_\tau\|_\infty) \Bigl(
    \delta(\tau) + 
    \sqrt{\wass_1(Sf^1_\tau, S\tilde f_\tau)}\Bigr)\de \tau.
    \end{aligned}
$$
Using the Grönwall's lemma and that $x \le \e^{x}$, 
for $t \in [0,T]$,
$$
\begin{aligned}
  \wass_1 \left(
  \tilde f_t, f^2_t\right)& 
  \le
  \delta(t)
   \le
  \e^{\lambda(T)\mathcal {K}\int_0^T(1+\|Sf^1_\tau\|_\infty)\de\tau}
  \\&\times \int_0^T \sqrt{ \wass_1(Sf^1_\tau, S\tilde f_\tau)} \de \tau.
\end{aligned}
$$
The thesis follows from the last inequality, together with \eqref{ineqApp} and \eqref{stimawass1}.

\end{document}